\documentclass[11pt]{amsart}

\usepackage{latexsym}
\usepackage{amssymb}
\usepackage{amsfonts}
\usepackage{graphicx}
\usepackage{epsf}
\usepackage[all]{xypic}

\usepackage{amsthm}

\usepackage{amsmath}
\usepackage{amscd}

\newtheorem{theorem}{Theorem}[section]
\newtheorem{corollary}[theorem]{Corollary}
\newtheorem{lemma}[theorem]{Lemma}
\newtheorem{proposition}[theorem]{Proposition}

\newtheorem{example}[theorem]{Example}
\newtheorem{remark}[theorem]{Remark}

\def\bea{\begin{eqnarray*}}
\def\eea{\end{eqnarray*}}

\topmargin=-3mm \evensidemargin=0in \oddsidemargin=0in

\def\ot{\otimes}
\def\ra{\rightarrow}

\def\al{\alpha}

\def\bea{\begin{eqnarray*}}
\def\eea{\end{eqnarray*}}

\def\rhu{\rightharpoonup}
\def\lhu{\leftharpoonup}

\pagestyle{myheadings}

\begin{document}

\title[Picard groups and quasi-Frobenius algebras]{Picard groups of quasi-Frobenius algebras and a question
on Frobenius strongly graded algebras}
\author[S. D\u{a}sc\u{a}lescu, C. N\u{a}st\u{a}sescu and L. N\u{a}st\u{a}sescu]{ Sorin D\u{a}sc\u{a}lescu$^1$, Constantin
N\u{a}st\u{a}sescu$^2$ and Laura N\u{a}st\u{a}sescu$^{2,3}$}
\address{$^1$ University of Bucharest, Faculty of Mathematics and Computer Science,
Str. Academiei 14, Bucharest 1, RO-010014, Romania} \address{ $^2$
Institute of Mathematics of the Romanian Academy, PO-Box
1-764\\
RO-014700, Bucharest, Romania}
\address{ $^3$
Max Planck Institut f${\rm \ddot{u}}$r Mathematik, Vivatsgasee 7,
53111 Bonn, Germany}
\address{
 e-mail: sdascal@fmi.unibuc.ro, Constantin\_nastasescu@yahoo.com,
 lauranastasescu@gmail.com
}

\date{}
\maketitle

\begin{abstract}
Our initial aim was to answer the question: does the Frobenius
(symmetric) property transfer from a strongly graded algebra to its
homogeneous component of trivial degree?  Related to it, we
investigate invertible bimodules and the Picard group of a finite
dimensional quasi-Frobenius algebra $R$. We compute the Picard
group, the automorphism group and the group of outer automorphisms
of a $9$-dimensional quasi-Frobenius algebra which is not Frobenius,
constructed by Nakayama. Using these results and a semitrivial
extension construction, we give an example of a symmetric strongly
graded algebra whose trivial homogeneous component is not even
Frobenius.  We investigate associativity of isomorphisms
$R^*\ot_RR^*\simeq R$ for quasi-Frobenius algebras $R$, and we
determine the order of the class of the invertible bimodule $H^*$ in
the Picard group of a finite-dimensional Hopf algebra $H$.
As an application, we construct new examples of symmetric algebras.\\
2020 MSC: 16D50, 16D20, 16L60, 16S99, 16T05, 16W50\\
Key words: quasi-Frobenius algebra, Frobenius algebra, symmetric
algebra, invertible bimodule, Picard group, strongly graded algebra,
Hopf algebra, Nakayama automorphism.
\end{abstract}

\section{Introduction and preliminaries}

A finite-dimensional algebra $A$ over a field $K$ is called
Frobenius if $A\simeq A^*$ as left (or equivalently, as right)
$A$-modules. If $A$ satisfies the stronger condition that $A\simeq
A^*$ as $A$-bimodules, then $A$ is called a symmetric algebra.
Frobenius algebras and symmetric algebras occur in algebra,
geometry, topology and quantum theory, and they have a rich
representation theory, which is relevant for all these branches of
mathematics.  A general problem is whether a certain ring property
transfers from an algebra on which a Hopf algebra (co)acts to the
subalgebra of (co)invariants; of special interest is the situation
where the (co)action produces a Galois extension. Particular cases
of high relevance are: (1) algebras $A$ on which a group $G$ acts as
automorphisms, and the transfer of properties to the subalgebra
$A^G$ of invariants; (2) algebras $A$ graded by a group $G$, and the
transfer of properties to the homogeneous component of trivial
degree. In the second case, such an $A$ is in fact a comodule
algebra over the Hopf group algebra $KG$, and the subalgebra of
coinvariants is just the component of trivial degree; moreover, the
associated extension is $KG$-Hopf-Galois if and only if $A$ is
strongly graded.

Our main aim is to answer the
following.\\

{\bf Question 1.} {\it If $A=\oplus_{g\in G}A_g$ is a strongly
$G$-graded algebra, where $G$ is a group with neutral element $e$,
and $A$ is Frobenius (symmetric), does it follow that the subalgebra
$A_e$ is Frobenius (symmetric)?}\\

In Section \ref{sectionassociative} we answer the question in the
negative, for both Frobenius and symmetric properties. There is an
interesting alternative way to formulate this question for the
Frobenius property. Frobenius algebras in the monoidal category of
$G$-graded vector spaces were considered in \cite{dnn1}, where they
were called graded Frobenius algebras. Such objects and a shift
version of them occur in noncommutative geometry, for example as
Koszul duals of certain Artin-Schelter regular algebras, and also in
the theory of Calabi-Yau algebras. A $G$-graded algebra $A$ is
graded Frobenius if $A\simeq A^*$ as graded left $A$-modules, where
$A^*$ is provided with a standard structure of such an object.
Obviously, if $A$ is graded Frobenius, then it is a Frobenius
algebra, while the converse is not true in general. If $A$ is
strongly graded, then $A$ is graded Frobenius if and only if $A_e$
is Frobenius, see \cite[Corollary 4.2]{dnn1}. Thus the question
above can be also formulated as: If $A$ is a strongly graded algebra
which is Frobenius, is it necessarily graded Frobenius?

Question 1 cannot be reformulated in a similar way for the symmetric
property. As $KG$ is a cosovereign Hopf algebra with respect to its
counit, see \cite{bichon} for details, a concept of symmetric
algebra can be defined in its category of corepresentations, i.e.,
in the monoidal category of $G$-graded vector spaces; the resulting
objects are called graded symmetric algebras. As expected, $A$ is
graded symmetric if $A\simeq A^*$ as graded $A$-bimodules. If $A$ is
strongly graded, then $A_e$ is symmetric whenever $A$ is graded
symmetric, however the converse is not true, see \cite[Remark
5.3]{dnn1}. This shows that Question 1 is not equivalent to asking
whether a symmetric strongly graded algebra is graded symmetric,
nevertheless this other question is also of interest.

The transfer of the Frobenius property from the strongly graded
algebra $A$ to $A_e$ works well under additional conditions, for
example if $A$ is free as a left and as a right $A_e$-module, in
particular if $A$ is a crossed product of $A_e$ by $G$, see
\cite{dnn2}. If $A$ is Frobenius, then it is left (and right)
self-injective, and then so is $A_e$; this means that $A_e$ is a
quasi-Frobenius algebra. Thus a possible example answering Question
1 in the negative should be built on a quasi-Frobenius algebra which
is not Frobenius. Moreover, by Dade's Theorem, each homogeneous
component of the strongly graded algebra $A$ is an invertible
$A_e$-bimodule, see \cite{nvo}, suggesting a study of the Picard
group ${\rm Pic}(A_e)$ of $A_e$. In Section \ref{sectionPicardQF} we
look at invertible bimodules over a finite-dimensional
quasi-Frobenius algebra $R$. For such an $R$, an object of central
interest is the linear dual $R^*$ of the regular bimodule $R$; we
show that it is an invertible $R$-bimodule. In the case where $R$ is
Frobenius, $R^*$ is isomorphic to a deformation of the regular
bimodule $R$, with the right action modified by the Nakayama
automorphism $\nu$ of $R$ with respect to a Frobenius form. It
follows that the order of the class $[R^*]$ of $R^*$ in ${\rm
Pic}(R)$ is just the order of the class of $\nu$ in the group ${\rm
Out}(R)$ of outer automorphisms of $R$. If $R$ is not Frobenius,
then $R^*$ cannot be obtained from $R$ by deforming the right action
by an automorphism, or in other words, $[R^*]$ does not lie in the
image of ${\rm Out}(R)$ inside ${\rm Pic}(R)$, and we show that it
lies in the centralizer of ${\rm Out}(R)$. We compute the order of
$[R^*]$ in ${\rm Pic}(R)$ for: (1) liftings of certain Hopf algebras
in the braided category of Yetter-Drinfeld modules, called quantum
lines, over the group Hopf algebra of a finite abelian group; (2)
certain quotients of quantum planes. This order may be any positive
integer, as well it can be infinite. It is known that a
finite-dimensional
Hopf algebra is Frobenius. In this case we  prove the following.\\

{\bf Theorem A.} {\it Let $H$ be a finite-dimensional Hopf algebra
with antipode $S$. Then the order of $[H^*]$ in ${\rm Pic}(H)$ is
the least common multiple of the order of the class of $S^2$ in
${\rm Out}(H)$ and the order of the modular element of $H^*$ in the
group of grouplike elements of $H^*$.}\\

 As a particular case, one gets
a well-known characterization of symmetric finite-dimensional Hopf
algebras, as those unimodular Hopf algebras such that $S^2$ is
inner.

In Section \ref{sectionpicardquasi} we explain that if $R$ is a
finite-dimensional quasi-Frobenius algebra, and $S$ a basic algebra
of $R$, which is necessarily Frobenius, then ${\rm Pic}(R)\simeq
{\rm Pic}(S)$, and moreover, the order of $[R^*]$ in ${\rm Pic}(R)$
is equal to the order of $[S^*]$ in ${\rm Pic}(S)$.

In Section \ref{sectionconstruction} we consider an algebra of
dimension $9$ which is quasi-Frobenius, but not Frobenius, and we
investigate its structure and determine its Picard group. This
algebra was introduced by Nakayama in \cite{nak} in a matrix
presentation, see also \cite[Example 16.19.(5)]{lam}. We use a
different presentation given in \cite{dnn3}. Let $\mathcal{R}$ be
the $K$-algebra with basis ${\bf B}=\{ E, X_1,X_2,Y_1,Y_2\}\cup
\{F_{ij}|1\leq i,j\leq 2\}$, and relations

 \bea E^2=E,&
F_{ij}F_{jr}=F_{ir}\\
EX_{i}=X_{i}, &X_{i}F_{ir}=X_{r}\\
F_{ij}Y_{j}=Y_{i},& Y_{i}E=Y_{i} \eea for any $1\leq i,j,r\leq 2$,
 and any other product of two elements of $\bf B$ is zero. We show
 that any invertible $\mathcal{R}$-bimodule is either a deformation
 of $\mathcal{R}$ or one of $\mathcal{R}^*$ by an automorphism of
 $\mathcal{R}$, and we have an exact sequence
 $$1\ra {\rm Inn}(\mathcal{R})\ra {\rm Aut}(\mathcal{R})\ra {\rm
 Pic}(\mathcal{R})\ra C_2\ra 1,$$
 where $C_2$ is the cyclic group of order $2$. If $V$ is an $\mathcal{R}$-bimodule, and $\alpha$ is an automorphism of $\mathcal{R}$, we denote by
 $_1V_\alpha$ the bimodule obtained from $V$ by changing the right action via $\alpha$. We collect the conclusions of this section in:\\

 {\bf Theorem B.} {\it  $[\mathcal{R}^*]$
has order $2$ in ${\rm Pic}(\mathcal{R})$. An invertible
$\mathcal{R}$-bimodule is isomorphic either
 to
$_1\mathcal{R}_{\alpha}$ or to $_1{\mathcal{R}^*}_{\alpha}$ for some
$\alpha\in {\rm Aut}(\mathcal{R})$, and ${\rm
Pic}(\mathcal{R})\simeq {\rm Out}(\mathcal{R})\times C_2$.}\\

In Section \ref{sectionautomorfisme} we compute the automorphism
group ${\rm Aut}(\mathcal{R})$ and the group ${\rm
Out}(\mathcal{R})$ of outer automorphisms. For this aim, we use
another presentation of $\mathcal{R}$, given in \cite{dnn3}. Thus
$\mathcal{R}$ is isomorphic to the Morita ring associated with a
Morita context connecting the rings $K$ and $M_2(K)$, where the
connecting bimodules are $K^2$ and $M_{2,1}(K)=\left[ \begin{array}{c} K\\
 K \end{array} \right]$ with actions given by the usual matrix
multiplication, and such that both Morita maps are zero. Thus
$\mathcal{R}$ is isomorphic as a linear space to the matrix algebra
$M_3(K)$, but its multiplication is altered by
collapsing the product of the off diagonal blocks. We prove: \\

{\bf Theorem C. }{\it ${\rm Aut}(\mathcal{R})$ is isomorphic to a
semidirect product $(K^2\times M_{2,1}(K))\rtimes (K^{\times}\times
GL_2(K))$, and ${\rm Out}(\mathcal{R})\simeq K^{\times}$.}\\

Here $K^{\times}$ denotes the multiplicative group associated with
$K$. We explicitly describe the automorphisms and the outer
automorphisms. Comparing to the matrix algebra $M_3(K)$, where there
are no outer automorphisms, the alteration of the multiplication
produces non-trivial outer automorphisms of $\mathcal{R}$.  As a
consequence of Theorems B and C, wee see that ${\rm
Pic}(\mathcal{R})\simeq K^{\times}\times C_2$.

In Section \ref{semitrivialextensions} we consider an arbitrary
finite-dimensional algebra $R$ and a morphism of $R$-bimodules
$\psi:R^*\ot_RR^*\ra R$ which is associative, i.e.,
$\psi(r^*\ot_Rs^*)\lhu t^*=r^*\rhu \psi(s^*\ot_Rt^*)$ for any
$r^*,s^*,t^*\in R^*$; here $\rhu$ and $\lhu$ denote the usual left
and right actions of $R$ on $R^*$. Then we can form the semitrivial
extension $R\rtimes_\psi R^*$, which is the cartesian product
$R\times R^*$ with the usual addition, and multiplication defined by
$$(r,r^*)(s,s^*)=(rs+\psi(r^*\ot_Rs^*),(r\rhu s^*)+(r^*\lhu s))$$ for any $r,s\in
R, r^*,s^*\in R^*$. It has a structure of a $C_2$-graded algebra
with $R$ as the homogeneous component of trivial degree. We prove:\\

{\bf Proposition A. }{\it $R\rtimes_\psi R^*$ is a symmetric
algebra.}\\

 If $\psi=0$, we get a well-known construction of
Tachikawa, see \cite{lam}; in this case $R$ may be any finite
dimensional algebra. If $\psi$ is an isomorphism, which implies that
$R^*$ is invertible, then $R\rtimes_\psi R^*$ is a strongly
$C_2$-graded algebra. This suggests that in order to construct
symmetric strongly graded
algebras, it is natural to ask the following.\\

{\bf Question 2. }{\it If $R$ is a finite-dimensional algebra such
that $R^*\ot_RR^*\simeq R$ as $R$-bimodules, is it true that any
isomorphism $\psi:R^*\ot_RR^*\ra R$ is associative?}\\

We address it in Section \ref{sectionassociative}. We will see that
if  $R^*\ot_RR^*\simeq R$, then $R$ is necessarily quasi-Frobenius.
We first answer the question if $R$ is Frobenius, and then we derive
the quasi-Frobenius case by using Morita theory to reduce to the
basic
algebra of $R$. We prove the following.\\

{\bf Proposition B. }{\it Let $R$ be a finite-dimensional algebra
such that $[R^*]$ has order at most $2$ in ${\rm Pic}(R)$. Then any
isomorphism
$\psi:R^*\ot_RR^*\ra R$ is associative.}\\

In particular,  any isomorphism $\varphi:\mathcal{R}^*\ot
_\mathcal{R}\mathcal{R}^*\ra \mathcal{R}$ resulting from Theorem B
 is associative, so then the strongly $C_2$-graded algebra
$\mathcal{R}\rtimes_{\varphi} \mathcal{R}^*$ is symmetric, thus also
Frobenius, while its component of trivial degree is not Frobenius.
This answers in the negative Question 1, for both Frobenius and
symmetric properties. It also answers the other question related to
the symmetric property, since $\mathcal{R}\rtimes_{\varphi}
\mathcal{R}^*$ is symmetric, but it is not graded symmetric as its
homogeneous component of degree $e$ is not symmetric.

 Besides
producing the large class of symmetric algebras presented above, the
semitrivial extension construction is of interest by itself, at
least taking into account the wealth of results of interest
concerning trivial extensions, i.e., those associated to zero
morphisms $\psi$.

More examples answering in the negative Question 1 for the
 symmetric property are obtained for Frobenius algebras $R$ such that $[R^*]$ has order $2$ in
${\rm Pic}(R)$. We present several classes of algebras $R$
 enjoying these properties. Among them, we note that for any finite-dimensional
 unimodular Hopf algebra $H$, $[H^*]$ has order at most
 $2$ in ${\rm Pic}(H)$.

We work over a field $K$, with multiplicative group $K^{\times}$. We
refer to \cite{lam}, \cite{lorenz} and \cite{sy} for facts related
to (quasi-)Frobenius algebras and symmetric algebras,  to \cite{nvo}
for results about graded rings, and to \cite{radford2} for basic
notions about Hopf algebras. We recall that if $G$ is a group with
neutral element $e$, an algebra $A$ is $G$-graded if it has a
decomposition $A=\oplus_{g\in G}A_g$ as a direct sum of linear
subspaces such that $A_gA_h\subset A_{gh}$ for any $g,h\in G$; in
particular, $A_e$ is a subalgebra of $A$. Such an $A$ is called
strongly graded if $A_gA_h=A_{gh}$ for any $g,h\in G$.

\section{Quasi-Frobenius algebras and invertible bimodules}
\label{sectionPicardQF}

We recall from \cite{bass} some basic facts concerning invertible
bimodules and the Picard group. Let $R$ be an algebra over a field
$K$. An $R$-bimodule $P$ is called invertible if it satisfies one of
the following equivalent conditions: (1) There exists a bimodule $Q$
such that $P\ot_RQ$ and $Q\ot_RP$ are isomorphic to $R$ as
bimodules; (2) The functor $P\ot_R -:R-mod\ra R-mod$ is an
equivalence of categories; (3) $P$ is a finitely generated
projective generator as a left $R$-module, and the map $\omega:R\ra
{\rm End}(_RP)$, $\omega (r)(p)=pr$ for any $r\in R, p\in P$ is a
ring isomorphism.

We keep the usual convention that the multiplication in ${\rm
End}(_RP)$ is the inverse map composition. The set of isomorphism
types of invertible $R$-bimodules is a group with multiplication
defined by $[U]\cdot [V]=[U\ot_RV]$, where $[U]$ denotes the class
of the bimodule $U$ with respect to the isomorphism equivalence
relation. This group is called the Picard group of $R$, and it is
denoted by ${\rm Pic}(R)$.

If $V$ is an $R$-bimodule and $\alpha,\beta$ are elements in the
group ${\rm Aut}(R)$ of algebra automorphisms  of $R$, we denote by
$_\alpha V_\beta$ the bimodule with the same underlying space as
$V$, and left and right actions defined by $r\ast v=\alpha(r)v$ and
$v\ast r=v\beta(r)$ for any $v\in V$ and $r\in R$. The following
facts hold for any $\alpha,\beta,\gamma\in {\rm Aut}(R)$. All
isomorphisms are of $R$-bimodules, and $1$ denotes the identity morphism.\\

$\bullet$ $_{\gamma\alpha} R_{\gamma\beta}\simeq {_{\alpha}
R_{\beta}}$, in particular $_{\alpha} R_{\beta}\simeq
{_1R_{\alpha^{-1}\beta}}$.\\

$\bullet$ $_1R_\alpha \ot _{R} {_1R_\beta}\simeq
{_1R_{\alpha\beta}}$, thus $_1R_\alpha$ is invertible, and
$[_1R_\alpha]^{-1}=[_1R_{\alpha^{-1}}]$.\\

$\bullet$ $_1R_\alpha\simeq {_1R_\beta}$ if and only if
$\alpha\beta^{-1}$ is an inner automorphism of $R$, i.e., there
exists an invertible element $u\in R$ such that
$\alpha\beta^{-1}(r)=u^{-1}ru$ for any $r\in R$. Denote by ${\rm
Inn}(R)$ the group of inner automorphisms of $R$. In particular,
$_1R_\alpha \simeq R$ if and only if $\alpha\in {\rm Inn}(R)$, thus
there is an exact sequence of groups $0\ra {\rm
Inn}(R)\hookrightarrow {\rm Aut}(R)\ra {\rm Pic}(R)$, the last
morphism in the sequence taking $\alpha$ to $_1R_\alpha$. The factor
group ${\rm Aut}(R)/{\rm Inn}(R)$, denoted by ${\rm Out}(R)$, is
called the group of outer automorphisms of $R$, and it embeds into
${\rm Pic}(R)$.\\

$\bullet$ $_\alpha V_\beta\simeq {_\alpha R_1}\ot _R V\ot_R
{_1R_\beta}$.\\

We will also need the following.

\begin{proposition} \label{isoinvertible}
{\rm (\cite[page 73]{bass})} Let $U$ and $V$ be invertible
$R$-bimodules such that $U\simeq V$ as left $R$-modules. Then there
exists $\alpha\in {\rm Aut}(R)$ such that $U\simeq {_1V_\alpha}$ as
$R$-bimodules.
\end{proposition}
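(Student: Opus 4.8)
The plan is to exploit characterization (3) of invertibility: an invertible bimodule $P$ is a finitely generated projective generator as a left $R$-module, and right multiplication $\omega_P:R\ra {\rm End}(_RP)$, $\omega_P(r)(p)=pr$, is a ring isomorphism (recall the convention that the product in ${\rm End}(_RP)$ is reverse composition, which is exactly what makes $\omega_P$ a homomorphism rather than an anti-homomorphism). Applying this to both $U$ and $V$ yields ring isomorphisms $\omega_U:R\ra {\rm End}(_RU)$ and $\omega_V:R\ra {\rm End}(_RV)$. The idea is that a left-module isomorphism $U\simeq V$ transports the right $R$-action, and the discrepancy between the two right actions is measured by an automorphism of $R$ read off through these identifications.

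Concretely, I would fix an isomorphism of left $R$-modules $f:U\ra V$ and consider conjugation by $f$, namely $\Phi:{\rm End}(_RU)\ra {\rm End}(_RV)$, $\Phi(\phi)=f\circ\phi\circ f^{-1}$. Since $f$ is left $R$-linear, $\Phi(\phi)$ is again left $R$-linear, and $\Phi$ is bijective with inverse conjugation by $f^{-1}$. As conjugation preserves ordinary composition, it preserves its reverse as well, so $\Phi$ is a ring isomorphism for the reverse-composition product on both sides. I then set
$$\alpha:=\omega_V^{-1}\circ\Phi\circ\omega_U:R\ra R,$$
which is a composite of ring isomorphisms, hence $\alpha\in {\rm Aut}(R)$.

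It remains to verify that $f:U\ra {_1V_\alpha}$ is a morphism of bimodules. Left linearity is immediate, since the left action on $_1V_\alpha$ is that of $V$ and $f$ is left $R$-linear. For the right action, the defining relation for $\alpha$ reads $\omega_V(\alpha(r))=f\circ\omega_U(r)\circ f^{-1}$ for every $r\in R$; evaluating both sides at $f(u)$ gives $f(u)\alpha(r)=f(ur)$, which is precisely $f(ur)=f(u)\ast r$ in $_1V_\alpha$. Hence $f$ is a bimodule isomorphism $U\simeq {_1V_\alpha}$.

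I do not expect a genuine obstacle here; the one point demanding care is the bookkeeping of the composition conventions. Because ${\rm End}(_RP)$ is equipped with reverse composition, one must confirm that $\omega_U,\omega_V$ are honest ring homomorphisms and that $\Phi$ preserves this product, so that $\alpha$ comes out as an automorphism (not an anti-automorphism) and the intertwining identity is oriented correctly. Once the conventions are pinned down, the remaining verification is the routine evaluation above.
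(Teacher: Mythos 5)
Your proof is correct. The paper does not prove this proposition at all --- it simply cites it from Bass (page 73) --- and your argument is precisely the standard one behind that citation: transport the right action through a left-module isomorphism $f$, using that $\omega_U$ and $\omega_V$ are ring isomorphisms (characterization (3) of invertibility) so that $\alpha=\omega_V^{-1}\circ\Phi\circ\omega_U$ is an algebra automorphism, and then the identity $\omega_V(\alpha(r))=f\circ\omega_U(r)\circ f^{-1}$ evaluated at $f(u)$ gives exactly $f(ur)=f(u)\alpha(r)$, i.e.\ $f:U\to{_1V_\alpha}$ is a bimodule isomorphism. Your attention to the reverse-composition convention is exactly the right bookkeeping point: it is what makes $\omega_U,\omega_V$ homomorphisms rather than anti-homomorphisms, and conjugation by $f$ preserves the reversed product, so $\alpha$ is indeed an automorphism.
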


Now let $V$ be a bimodule over the $K$-algebra $R$. Then the linear
dual $V^*={\rm Hom}_K(V,K)$ is an $R$-bimodule with actions denoted
by $\rhu$ and $\lhu$, given by $(r\rhu v^*)(v)=v^*(vr)$ and
$(v^*\lhu r)(v)=v^*(rv)$ for any $r\in R, v^*\in V^*, v\in V$. One
can easily check that $(_\alpha V_\beta)^*= {_\beta (V^*)_\alpha}$
for any $\alpha,\beta \in {\rm Aut}(R)$. If $V$ is finite
dimensional, then $(V^*)^*\simeq V$, and this shows that two
finite-dimensional bimodules $V$ and $W$ are isomorphic if and only
if so are their duals $V^*$ and $W^*$.

We are interested in a particular bimodule, namely $R^*$, the dual
of $R$. Some immediate consequences of the discussion above are that
for any $\alpha,\beta,\gamma\in {\rm Aut}(R)$:\\

$\bullet$ $_{\gamma\alpha} (R^*)_{\gamma\beta}\simeq {_{\alpha}
(R^*)_{\beta}}$, in particular $_{\alpha} (R^*)_{\beta}\simeq
{_1(R^*)_{\alpha^{-1}\beta}}$. Indeed, $_{\gamma\alpha}
(R^*)_{\gamma\beta}\simeq (_{\gamma\beta} R_{\gamma\alpha})^*\simeq
(_{\beta} R_{\alpha})^*\simeq {_{\alpha} (R^*)_{\beta}}$.\\

$\bullet$ If $R$ has finite dimension, then $_{1}
(R^*)_{\alpha}\simeq {_{1} (R^*)_{\beta}}$ if and only if
$\alpha^{-1}\beta \in {\rm Inn}(R)$. Indeed, $_{1} (R^*)_{\alpha}$
and ${_{1} (R^*)_{\beta}}$ are isomorphic if and only if so are
their duals, i.e., $_\alpha  R_1\simeq {_\beta R_1}$, which is the
same to $_1R_{\alpha^{-1}} \simeq {_1R_{\beta^{-1}}}$, i.e.,
$\alpha^{-1}\beta \in {\rm Inn}(R)$. Since ${\rm Inn}(R)$ is a
normal subgroup of ${\rm Aut}(R)$, this is equivalent to
$\alpha\beta^{-1} \in {\rm Inn}(R)$.\\

The following holds for any finite-dimensional algebra.
\begin{proposition} \label{isoend}
Let $R$ be a finite-dimensional algebra. Then the map $\omega:R\ra
{\rm End}(_RR^*)$ defined by $\omega (a)(r^*)=r^*\lhu a$ for any
$r^*\in R^*$ and $a\in R$, is an isomorphism of algebras.
\end{proposition}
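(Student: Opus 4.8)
The plan is to check in turn that $\omega$ is well defined, multiplicative, injective, and surjective, with surjectivity being the only non-routine point. First I would verify that each $\omega(a)$ really lands in ${\rm End}(_RR^*)$: for $s\in R$, the left $R$-linearity of $\omega(a)$ amounts to $(s\rhu r^*)\lhu a=s\rhu(r^*\lhu a)$, which is precisely the statement that the two actions $\rhu$ and $\lhu$ commute, i.e. that $R^*$ is a bimodule. Multiplicativity is then a short computation: from $(r^*\lhu a)\lhu b=r^*\lhu(ab)$ (evaluate both sides at $v\in R$ to get $r^*(a(bv))=r^*((ab)v)$) together with the convention that composition in ${\rm End}(_RR^*)$ is taken in the inverse order, one obtains $\omega(a)\cdot\omega(b)=\omega(ab)$; and $\omega(1)={\rm id}$ since $r^*\lhu 1=r^*$. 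It is essential here that the paper uses inverse-order composition, for otherwise $\omega$ would be merely an anti-homomorphism.

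For injectivity, suppose $\omega(a)=0$, so $r^*\lhu a=0$ for every $r^*\in R^*$. Evaluating at $1\in R$ gives $r^*(a)=(r^*\lhu a)(1)=0$ for all $r^*$, and since $R^*$ separates the points of $R$, this forces $a=0$.

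The surjectivity is the main obstacle, and I would handle it by dualizing. Given $f\in{\rm End}(_RR^*)$, for each $v\in R$ the assignment $r^*\mapsto f(r^*)(v)$ is a linear functional on $R^*$; as $R$ is finite dimensional, the natural map $R\ra (R^*)^*$ is an isomorphism, so there is a unique $F(v)\in R$ with $f(r^*)(v)=r^*(F(v))$ for all $r^*\in R^*$, and $F:R\ra R$ is $K$-linear. The key step is to translate the left $R$-linearity of $f$ into a right $R$-linearity of $F$: writing out $f(s\rhu r^*)=s\rhu f(r^*)$ and evaluating at $v$, the left side is $(s\rhu r^*)(F(v))=r^*(F(v)s)$ while the right side is $f(r^*)(vs)=r^*(F(vs))$, so $F(vs)=F(v)s$ for all $v,s\in R$. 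Thus $F$ is an endomorphism of the right regular module $R_R$, hence $F(v)=F(1)v=av$ with $a:=F(1)$. Finally $f(r^*)(v)=r^*(av)=(r^*\lhu a)(v)$ for all $r^*$ and $v$, that is, $f=\omega(a)$.

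Putting these together shows $\omega$ is an algebra isomorphism. I would note that surjectivity could alternatively be argued by a dimension count, identifying $R^*$ (with the $\rhu$-action) as the dual of $R_R$ under the contravariant $K$-duality ${\rm Hom}_K(-,K)$ and using that this duality yields ${\rm End}(_RR^*)\simeq{\rm End}(R_R)^{op}$, whose dimension equals $\dim_K R$; but the direct construction of the preimage $a=F(1)$ is cleaner and self-contained, so I would present that as the main route.
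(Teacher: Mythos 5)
Your proof is correct and takes essentially the same approach as the paper's: injectivity by evaluating $r^*\lhu a$ at $1$, and surjectivity from the canonical isomorphism $R\simeq R^{**}$ combined with the left $R$-linearity of $f$. The only difference is cosmetic: you invoke $R\simeq R^{**}$ at every point $v$ to build a right $R$-module endomorphism $F$ of $R$ and set $a=F(1)$, whereas the paper applies it only at the point $1$ and then runs the same linearity computation directly (your added checks of well-definedness and of multiplicativity under the inverse-composition convention are details the paper leaves to the reader).
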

\begin{proof}
Since the linear dual functor is a duality between the categories of
finite-dimensional right, respectively left, $R$-modules, we have
$R\simeq {\rm End}(R_R)\simeq {\rm End}((R_R)^*)={\rm End}(_RR^*)$.
In particular $R$ and ${\rm End}(_RR^*)$ have the same dimension.

It is easy to check that $\omega$ is well defined and it is an
algebra morphism. If $\omega(a)=0$ for some $a$, then $r^*\lhu a=0$
for any $r^*\in R^*$, and evaluating at 1, we get $r^*(a)=0$. Thus
$a$ must be 0, so $\omega$ is injective, and then it is an
isomorphism.
\end{proof}

Let $R$ be a finite-dimensional algebra. We recall that  $R$ is
called quasi-Frobenius if it is injective as a left (or
equivalently, right) $R$-module. It is known that $R$ is
quasi-Frobenius if and only if the left $R$-modules $R$ and $R^*$
have the same distinct indecomposable components (possibly occurring
with different multiplicities), see \cite[Section 16C]{lam}.
Therefore a Frobenius algebra is always quasi-Frobenius.

\begin{corollary} \label{dualinvertible}
Let $R$ be a finite-dimensional algebra. Then $R^*$ is an invertible
$R$-bimodule if and only if $R$ is a quasi-Frobenius algebra.
\end{corollary}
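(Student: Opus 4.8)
The plan is to prove the equivalence by combining the characterization of invertible bimodules from condition (3) with the module-theoretic characterization of quasi-Frobenius algebras.

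First I would establish the forward-ish direction that sets up the structure. By condition (3) in the definition of an invertible bimodule, $R^*$ is invertible as an $R$-bimodule if and only if $R^*$ is a finitely generated projective generator as a left $R$-module \emph{and} the map $\omega\colon R\ra {\rm End}({_R}R^*)$, $\omega(a)(r^*)=r^*\lhu a$, is a ring isomorphism. The crucial observation is that the second requirement is \emph{automatic}: Proposition \ref{isoend} already shows that this $\omega$ is an isomorphism for \emph{every} finite dimensional algebra $R$. So the entire content of ``$R^*$ is invertible'' collapses to the single condition that $R^*$ is a finitely generated projective generator as a left $R$-module.

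Next I would translate ``finitely generated projective generator'' into a statement about indecomposable summands. Since $R$ is finite dimensional, $R^*$ is automatically finitely generated as a left $R$-module, and the issue is whether it is projective and a generator. Over a finite dimensional (hence semiperfect) algebra, a finitely generated left module is projective precisely when each of its indecomposable direct summands is projective, i.e.\ is (isomorphic to) one of the indecomposable projective summands of $_RR$; and it is a generator precisely when every indecomposable projective summand of $_RR$ occurs as a summand of the module. Thus $R^*$ is simultaneously projective and a generator as a left $R$-module if and only if $_RR$ and $_RR^*$ have exactly the same set of distinct indecomposable summands (allowing different multiplicities). This is precisely the characterization of the quasi-Frobenius property quoted just above the statement from \cite[Section 16C]{lam}, so this yields the equivalence.

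The main obstacle, and the step I would be most careful about, is the equivalence between ``$R^*$ is a projective generator'' and ``$_RR$ and $_RR^*$ have the same indecomposables.'' One must justify that projectivity of a finitely generated module over an artinian algebra is detected summand-by-summand, which rests on the Krull--Schmidt theorem and the fact that the indecomposable projective left $R$-modules are exactly the summands of $_RR$; and one must note that the generator condition forces each such projective indecomposable to appear in $R^*$. I would verify that $\dim_K R^* = \dim_K R$ guarantees finite generation and keeps the argument symmetric. Conversely, assuming $R$ quasi-Frobenius gives the common indecomposables, from which projectivity and the generator property of $_RR^*$ follow, and then condition (3) together with Proposition \ref{isoend} delivers invertibility.
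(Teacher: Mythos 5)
Your proof is correct, and it takes a genuinely different route from the paper's, most visibly in the direction ``invertible $\Rightarrow$ quasi-Frobenius''. You run both implications through a single chain: by condition (3) together with Proposition \ref{isoend} (which, as you rightly observe, makes the endomorphism condition automatic for $R^*$ over any finite dimensional algebra), invertibility of $R^*$ is equivalent to $_RR^*$ being a finitely generated projective generator; by Krull--Schmidt over the artinian algebra $R$, this is in turn equivalent to $_RR$ and $_RR^*$ having the same distinct indecomposable summands; and that is exactly the characterization of quasi-Frobenius quoted from \cite[Section 16C]{lam}. The paper instead treats the two directions asymmetrically. For the forward direction it notes that an invertible bimodule is projective as a \emph{right} $R$-module, so its linear dual $(R^*)^*\simeq R$ is injective as a left $R$-module, i.e.\ $R$ is self-injective; this uses the definition of quasi-Frobenius rather than the indecomposables characterization, and needs no Krull--Schmidt argument. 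For the converse the paper, like you, obtains the generator property from the common indecomposable components and then invokes Proposition \ref{isoend}; but it gets projectivity of $_RR^*$ by dualizing the injectivity of $R$ as a right module, rather than reading it off the indecomposable summands. What your route buys is uniformity and transparency: it isolates the precise module-theoretic content of invertibility of $R^*$ (projective generator on the left) and matches it directly against Lam's characterization, so both implications become one equivalence. The price is that both directions lean on Krull--Schmidt over a semiperfect ring and on the splitting argument identifying generators by their indecomposable summands, whereas the paper's forward direction gets by with the cheaper duality between projective right modules and injective left modules.
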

\begin{proof}
If $R^*$ is an invertible bimodule, then it is projective as a right
$R$-module, so then its linear dual $(R^*)^*$ is an injective left
$R$-module. But $(R^*)^*\simeq R$ as left $R$-modules, and we get
that $R$ is left selfinjective.

Conversely, assume that $R$ is quasi-Frobenius. Since $R$ is an
injective right $R$-module, we get that $R^*$ is a projective left
$R$-module. On the other hand, since the left $R$-modules $R$ and
$R^*$ have the same distinct indecomposable components, we see that
there is an epimorphism $(R^*)^n\ra R$ for a large enough positive
integer $n$, thus $R^*$ is a generator as a left $R$-module. If we
also take into account Proposition \ref{isoend}, we get that $R^*$
is invertible.
\end{proof}

 If $R$ is Frobenius,
then an element $\lambda\in R^*$ such that $(R\rhu \lambda) =R^*$,
is called a Frobenius form on $R$; in this case, the map $a\mapsto
(a\rhu \lambda)$ is an isomorphism of left $R$-modules between $R$
and $R^*$, and also, the map $a\mapsto (\lambda\lhu a)$ is an
isomorphism of right $R$-modules from $R$ to $R^*$. The Nakayama
automorphism of $R$ associated with a Frobenius form $\lambda$ is
the map $\nu:R\ra R$ defined such that for any $a\in R$, $\nu (a)$
is the unique element of $R$ satisfying $(\nu(a)\rhu
\lambda)=(\lambda \lhu a)$, or equivalently, $\lambda (ar)=\lambda
(r\nu(a))$ for any $r\in R$; $\nu$ turns out to be an algebra
automorphism. If $\nu$ and $\nu'$ are Nakayama automorphisms
associated with two Frobenius forms, then there exists an invertible
element $u\in R$ such that $\nu'(a)=u^{-1}\nu (a)u$ for any $a\in
R$, thus $\nu$ and $\nu'$ are equal up to an inner automorphism. It
follows that the class of a Nakayama automorphism in ${\rm Out}(R)$
does not depend on the Frobenius form; see \cite[Section 16E]{lam},
\cite[Section 2.2]{lorenz} or \cite[Chapter IV]{sy} for details.

If the quasi-Frobenius algebra $R$ is not Frobenius, $R^*$ is not
isomorphic to any $_1R_\alpha$, as $R^*$ is not isomorphic to $R$ as
left $R$-modules. In the Frobenius case, we have the following
result; it appears in an equivalent formulation in \cite[Proposition
3.15]{sy}.

\begin{proposition} \label{dualalpha}
Let $R$ be a Frobenius algebra. Then there exists $\nu \in {\rm
Aut}(R)$ such that $R^*\simeq {_1R_\nu}$ as bimodules. Moreover, any
such $\nu$ is the Nakayama automorphism of $R$ associated with a
Frobenius form. As a consequence, the order of $[R^*]$ in ${\rm
Pic}(R)$ is equal to the order of the class of $\nu$ in ${\rm
Out}(R)$.
\end{proposition}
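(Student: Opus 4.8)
The plan is to extract the bimodule isomorphism from the general machinery and then pin down the automorphism by evaluating at $1$. First I would produce $\nu$: since $R$ is Frobenius it is quasi-Frobenius, so $R^*$ is an invertible $R$-bimodule by Corollary \ref{dualinvertible}, and of course $R={_1R_1}$ is invertible; moreover $R^*\simeq R$ as left $R$-modules, by the very definition of a Frobenius algebra. Applying Proposition \ref{isoinvertible} with $U=R^*$ and $V=R$ then yields an automorphism $\nu\in {\rm Aut}(R)$ together with a bimodule isomorphism $R^*\simeq {_1R_\nu}$, which settles the first assertion.

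For the ``moreover'' part --- and simultaneously to identify this and any such $\nu$ --- I would fix an arbitrary bimodule isomorphism $\psi:{_1R_\nu}\to R^*$ and set $\lambda:=\psi(1)\in R^*$. Left $R$-linearity of $\psi$ gives $\psi(a)=\psi(a\cdot 1)=a\rhu\psi(1)=a\rhu\lambda$ for all $a$, so $a\mapsto a\rhu\lambda$ maps $R$ onto $R^*$; hence $R\rhu\lambda=R^*$ and $\lambda$ is a Frobenius form. Then right $R$-linearity, recalling that the right action on ${_1R_\nu}$ is $1\ast r=\nu(r)$, gives
\[
\nu(r)\rhu\lambda=\psi(\nu(r))=\psi(1\ast r)=\psi(1)\lhu r=\lambda\lhu r
\]
for every $r\in R$. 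This is exactly the relation defining the Nakayama automorphism attached to $\lambda$, so $\nu$ is that Nakayama automorphism; as $\psi$ was arbitrary, every $\nu$ with $R^*\simeq {_1R_\nu}$ arises this way.

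Finally, for the statement about orders I would invoke the bullet facts recalled before Proposition \ref{isoinvertible}: the assignment $\alpha\mapsto [{_1R_\alpha}]$ is a group homomorphism ${\rm Aut}(R)\to {\rm Pic}(R)$, since $_1R_\alpha\ot_R{_1R_\beta}\simeq {_1R_{\alpha\beta}}$, and its kernel is exactly ${\rm Inn}(R)$, since $_1R_\alpha\simeq R$ iff $\alpha\in {\rm Inn}(R)$; thus it induces an embedding ${\rm Out}(R)\hookrightarrow {\rm Pic}(R)$. Because $[R^*]=[{_1R_\nu}]$, for any $n$ we have $[R^*]^n=[{_1R_{\nu^n}}]$, which is trivial in ${\rm Pic}(R)$ iff $\nu^n\in {\rm Inn}(R)$, i.e.\ iff the class of $\nu$ has order dividing $n$ in ${\rm Out}(R)$. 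Hence the order of $[R^*]$ in ${\rm Pic}(R)$ equals the order of the class of $\nu$ in ${\rm Out}(R)$.

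The computation is short, so there is no deep obstacle here; the one delicate point is the bookkeeping of the $\rhu$/$\lhu$ conventions, so that the derived identity $\nu(r)\rhu\lambda=\lambda\lhu r$ matches the stated defining relation of the Nakayama automorphism and not its inverse. I would cross-check this by verifying directly that $a\mapsto a\rhu\lambda$ realizes ${_1R_\nu}\simeq R^*$, using the scalar identity $\lambda(xy)=\lambda(y\nu(x))$ to compare $\lambda(rsa)$ with $\lambda(sa\,\nu(r))$ on the two sides.
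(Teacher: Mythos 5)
Your proposal is correct and follows essentially the same route as the paper: the existence of $\nu$ via Proposition \ref{isoinvertible} (the paper leaves implicit the invertibility of $R^*$ from Corollary \ref{dualinvertible}, which you rightly make explicit), then evaluating a bimodule isomorphism ${_1R_\nu}\to R^*$ at $1$ to produce the Frobenius form $\lambda$ and derive $\nu(r)\rhu\lambda=\lambda\lhu r$. Your final paragraph spelling out the order statement via the embedding ${\rm Out}(R)\hookrightarrow{\rm Pic}(R)$ is exactly the intended use of the bullet facts preceding Proposition \ref{isoinvertible}, which the paper's proof leaves to the reader.
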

\begin{proof}
The first part follows directly from Proposition
\ref{isoinvertible}, since $R^*\simeq R$  as left $R$-modules.

Let $\gamma:{_1R_\nu}\ra R^*$ be an isomorphism of bimodules, and
let  $\lambda=\gamma (1)$. Then $(R\rhu \lambda) =R^*$, so $\lambda$
is a Frobenius form on $R$. Then for any $a,x\in R$ \bea(\lambda
\lhu a)(x)
&=&(\gamma (1)\lhu a)(x)\\
&=&\gamma (1\cdot \nu (a))(x)\\
&=&\gamma (\nu(a)\cdot 1)(x)\\
&=&(\nu(a)\rhu \gamma(1))(x)\\
&=&(\nu(a)\rhu \lambda)(x), \eea showing that $(\lambda \lhu
a)=(\nu(a)\rhu \lambda)$, thus $\nu$ is the Nakayama automorphism
associated with $\lambda$.
\end{proof}

Looking inside the Picard group,  the previous Proposition gives a
new perspective on the well-known fact that a Frobenius algebra is
symmetric if and only if the Nakayama automorphism is inner, see
\cite[Theorem 16.63]{lam}. Indeed, $R$ is symmetric if and only if
$R^*\simeq R$ as bimodules, i.e., ${_1R_\nu}\simeq R$, and this is
equivalent to $\nu$ being inner.

The following indicates a commutation property of the class of $R^*$
in the Picard group of $R$.

\begin{proposition} \label{dualulcomuta}
Let $R$ be a quasi-Frobenius finite-dimensional algebra, and let
$\alpha\in {\rm Aut}(R)$. Then $R^*\ot_R{_1R_\alpha}\simeq
{_1R_\alpha}\ot_RR^*$ as $R$-bimodules. Thus the element $[R^*]$ of
the Picard group ${\rm Pic}(R)$ lies in the centralizer of the image
of ${\rm Out}(R)$.
\end{proposition}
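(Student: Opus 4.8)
The plan is to show that both $R^*\ot_R{_1R_\alpha}$ and ${_1R_\alpha}\ot_RR^*$ are isomorphic to one and the same twisted bimodule, namely ${_1(R^*)_\alpha}$; the commutativity statement in ${\rm Pic}(R)$ then follows immediately, with Corollary \ref{dualinvertible} guaranteeing that $[R^*]$ really is an element of the Picard group so that the word ``centralizer'' is meaningful. Everything will be assembled from the bulleted transport identities recorded earlier in this section, so no new idea is needed beyond careful bookkeeping of the left and right twists.

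First I would handle the left-hand tensor product. Applying the identity $_\alpha V_\beta\simeq {_\alpha R_1}\ot_RV\ot_R{_1R_\beta}$ with $V=R^*$, trivial left twist and right twist $\alpha$, and using that ${_1R_1}=R$ is a unit for $\ot_R$, I obtain $R^*\ot_R{_1R_\alpha}\simeq {_1(R^*)_\alpha}$. Next I would treat the right-hand product, where the twist sits on the ``wrong'' side of $R^*$ and must be transported across. I would first rewrite ${_1R_\alpha}\simeq {_{\alpha^{-1}}R_1}$, the special case $(\alpha^{-1},1)$ of the relation $_\alpha R_\beta\simeq {_1R_{\alpha^{-1}\beta}}$. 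The same unital-tensor identity, now with left twist $\alpha^{-1}$ and trivial right twist, gives ${_1R_\alpha}\ot_RR^*\simeq {_{\alpha^{-1}}R_1}\ot_RR^*\simeq {_{\alpha^{-1}}(R^*)_1}$. Finally, the corresponding relation for the dual, $_\alpha(R^*)_\beta\simeq {_1(R^*)_{\alpha^{-1}\beta}}$, evaluated at $(\alpha^{-1},1)$, converts ${_{\alpha^{-1}}(R^*)_1}$ into ${_1(R^*)_\alpha}$. Comparing the two paragraphs yields $R^*\ot_R{_1R_\alpha}\simeq {_1R_\alpha}\ot_RR^*$.

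For the centralizer assertion, I would recall that the image of ${\rm Out}(R)$ in ${\rm Pic}(R)$ consists precisely of the classes $[{_1R_\alpha}]$ with $\alpha\in {\rm Aut}(R)$. The isomorphism just established reads $[R^*]\cdot[{_1R_\alpha}]=[{_1R_\alpha}]\cdot[R^*]$ for every such $\alpha$, which is exactly the statement that $[R^*]$ commutes with every element of that image.

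There is no genuine obstacle here; the argument is entirely formal once the transport identities are in hand. The only points requiring care are the direction of each twist and the inversions $(\alpha^{-1})^{-1}=\alpha$ arising when the two transport relations are specialized, and the fact that the quasi-Frobenius hypothesis enters solely through Corollary \ref{dualinvertible}. One could alternatively route the whole argument through the duality anti-automorphism $[V]\mapsto[V^*]$ of ${\rm Pic}(R)$, applying it to $[R^*]\cdot[{_1R_\alpha}]$ and using $[(R^*)^*]=[R]$, but the direct transport above is more economical and stays strictly within the identities already proved.
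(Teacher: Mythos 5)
Your proof is correct and is essentially the paper's own argument: the paper establishes the same chain $R^*\ot_R{_1R_\alpha}\simeq {_1(R^*)_\alpha}\simeq {_{\alpha^{-1}}(R^*)_1}\simeq {_{\alpha^{-1}}R_1}\ot_RR^*\simeq {_1R_\alpha}\ot_RR^*$ using exactly the transport and dual-twist identities you invoke, with your version merely organized as both sides meeting at ${_1(R^*)_\alpha}$. Your remarks on where quasi-Frobenius enters (only via Corollary \ref{dualinvertible}) and on the description of the image of ${\rm Out}(R)$ are also accurate.
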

\begin{proof}
Taking into account the above considerations, we have isomorphisms
of $R$-bimodules
$$R^*\ot_R\,{_1R_\alpha}\simeq {_1(R^*)_\alpha}\simeq {_{\alpha^{-1}}(R^*)_1}\simeq {_{\alpha^{-1}}R_1}\ot_RR^*\simeq {_1R_\alpha}\ot_RR^*$$
\end{proof}

\begin{corollary}
Let $R$ be a Frobenius algebra. Then the class of the Nakayama
automorphism of $R$ lies in the centre of ${\rm Out}(R)$.
\end{corollary}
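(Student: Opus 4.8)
The plan is to deduce this directly from the two preceding propositions, the key point being that for a Frobenius algebra the class $[R^*]$ lies inside the image of ${\rm Out}(R)$ in ${\rm Pic}(R)$, and not merely inside ${\rm Pic}(R)$ itself. Recall from the discussion preceding Proposition \ref{isoinvertible} that the assignment $\alpha\mapsto [{_1R_\alpha}]$ defines an injective group homomorphism $\iota:{\rm Out}(R)\hookrightarrow {\rm Pic}(R)$, since $[{_1R_\alpha}]\cdot [{_1R_\beta}]=[{_1R_{\alpha\beta}}]$ and $[{_1R_\alpha}]=[R]$ exactly when $\alpha\in {\rm Inn}(R)$.

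First I would apply Proposition \ref{dualalpha} to write $[R^*]=[{_1R_\nu}]=\iota([\nu])$, where $\nu$ is a Nakayama automorphism of $R$; thus $[R^*]$ is the $\iota$-image of the class $[\nu]\in {\rm Out}(R)$. Next, since a Frobenius algebra is quasi-Frobenius, Proposition \ref{dualulcomuta} applies and tells us that $[R^*]$ commutes in ${\rm Pic}(R)$ with every element $\iota([\alpha])$ of the image of ${\rm Out}(R)$. Substituting $[R^*]=\iota([\nu])$, this reads $\iota([\nu])\,\iota([\alpha])=\iota([\alpha])\,\iota([\nu])$ for all $\alpha\in {\rm Aut}(R)$. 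Because $\iota$ is an injective homomorphism, this equality of images forces $[\nu][\alpha]=[\alpha][\nu]$ in ${\rm Out}(R)$; as $[\alpha]$ runs over all of ${\rm Out}(R)$, the class $[\nu]$ is central, which is the assertion.

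I expect no real obstacle here, as the argument is essentially a translation across the embedding $\iota$. The one point deserving care is the logical gap between Proposition \ref{dualulcomuta} and the present corollary: the former only places $[R^*]$ in the centralizer of the image of ${\rm Out}(R)$ for arbitrary quasi-Frobenius $R$, whereas to pull a commutation relation back from ${\rm Pic}(R)$ into ${\rm Out}(R)$ one needs $[R^*]$ to lie in the image of $\iota$. It is exactly the Frobenius hypothesis, through Proposition \ref{dualalpha}, that supplies this; for a genuinely non-Frobenius quasi-Frobenius algebra there is no Nakayama automorphism to begin with, and indeed $[R^*]$ escapes the image of ${\rm Out}(R)$ altogether, so the method has no analogue there.
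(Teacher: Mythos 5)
Your proposal is correct and is exactly the argument the paper intends: the corollary is stated without proof precisely because it follows by combining Proposition \ref{dualalpha} (which identifies $[R^*]$ with the image of the class of $\nu$ under the embedding ${\rm Out}(R)\hookrightarrow {\rm Pic}(R)$) with Proposition \ref{dualulcomuta} (which places $[R^*]$ in the centralizer of that image), and then pulling the commutation back through the injective homomorphism $\alpha\mapsto [{_1R_\alpha}]$. Your added remark about why the Frobenius hypothesis is essential — that for a non-Frobenius quasi-Frobenius algebra $[R^*]$ lies outside the image of ${\rm Out}(R)$ — matches the paper's own discussion preceding Proposition \ref{dualalpha}.
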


If $R$ is quasi-Frobenius, we are interested in the order of $[R^*]$
in the group ${\rm Pic}(R)$. This order is $1$ if and only if $R$ is
a symmetric algebra. The following examples show that  it may be any
integer $\geq 2$ in other quasi-Frobenius algebras, and also it can
be infinite.

For the first example, we recall that if $H$ is a finite-dimensional
Hopf algebra, then a left integral on $H$ is an element $\lambda \in
H^*$ such that $h^*\lambda =h^*(1)\lambda$ for any $h^*\in H^*$; the
multiplication of $H^*$ is given by the convolution product.  Any
finite-dimensional Hopf algebra $H$ is a Frobenius algebra, and a
non-zero left integral $\lambda$ on $H$ is a Frobenius form, see
\cite[Theorem 12.5]{lorenz}.

\begin{example} \label{exempleordinR^*}
{\rm Let $C$ be a finite abelian group, and let $C^*={\rm
Hom}(C,K^\times)$ be its character group. We consider certain Hopf
algebras in the braided category of Yetter-Drinfeld modules over the
group Hopf algebra $KC$, called quantum lines, and their liftings,
obtained by a bosonization construction. We obtain some
finite-dimensional pointed Hopf algebras with coradical $KC$, see
\cite{as}, \cite{bdg}. There are two classes of such objects.

$\bf{(I)}$ Hopf algebras of the type $H_1(C,n,c,c^*)$, where $n\geq
2$ is an integer, $c\in C$ and $c^*\in C^*$, such that $c^n\neq 1$,
$(c^*)^n=1$ and $c^*(c)$ is a primitive $n$th root of unity. It is
generated as an algebra by the Hopf subalgebra $KC$ and a
$(1,c)$-skewprimitive element $x$, i.e., the comultiplication works
as $\Delta (x)=c\ot x+x\ot 1$ on $x$, subject to relations
$x^n=c^n-1$ and $xg=c^*(g)gx$ for any $g\in C$. Note that the
required conditions show that $c^*$ has order $n$.

$\bf{(II)}$ Hopf algebras of the type $H_2(C,n,c,c^*)$, where $n\geq
2$ is an integer, $c\in C$ and $c^*\in C^*$, such that $c^*(c)$ is a
primitive $n$th root of unity. It is generated as an algebra by the
Hopf subalgebra $KC$ and a $(1,c)$-skewprimitive element $x$,
subject to relations $x^n=0$ and $xg=c^*(g)gx$ for any $g\in C$. In
this case, the order of $c^*$, which we denote by $m$, is a multiple
of $n$.

If $H$ is any of $H_1(C,n,c,c^*)$ or $H_2(C,n,c,c^*)$, a linear
basis of $H$ is ${\mathcal{B}}=\{ gx^j|g\in C, 0\leq j\leq n-1\}$,
thus the dimension of $H$ is $n|C|$, and the linear map $\lambda\in
H^*$ such that $\lambda(c^{1-n}x^{n-1})=1$ and $\lambda$ takes any
other element of $\mathcal{B}$ to 0, is a left integral on $H$, see
\cite[Proposition 1.17]{bdg}.

If $g\in C$, then
$$(\lambda \lhu g)(g^{-1}c^{1-n}x^{n-1})=\lambda(c^{1-n}x^{n-1})=1$$
and $\lambda\lhu g$ takes any other element of $\mathcal{B}$ to 0,
while
$$(g\rhu
\lambda)(g^{-1}c^{1-n}x^{n-1})=\lambda(g^{-1}c^{1-n}x^{n-1}g)=c^*(g)^{n-1}\lambda(c^{1-n}x^{n-1})=c^*(g)^{n-1}$$
and $g\rhu \lambda$ takes any other element of $\mathcal{B}$ to 0.
These show that $(g\rhu \lambda) =c^*(g)^{n-1}(\lambda\lhu g)$, so
the Nakayama automorphism $\nu$ associated with the Frobenius form
$\lambda$ satisfies $\nu(g)=c^*(g)^{1-n}g$.

On the other hand, if we denote $\xi=c^*(c)$, we have
$$(x\rhu \lambda)(c^{1-n}x^{n-2})=\lambda(c^{1-n}x^{n-1})=1$$
and $x\rhu \lambda$ takes any other element of $\mathcal{B}$ to 0,
while
$$(\lambda \lhu
x)(c^{1-n}x^{n-2})=\lambda(xc^{1-n}x^{n-2})=\xi^{1-n}\lambda
(c^{1-n}x^{n-1})=\xi$$ and $\lambda\lhu x$ takes any other element
of $\mathcal{B}$ to 0. Thus we get $\nu(x)=\xi x$.

Denote the order of $c^*$ by $m$; we noticed that $m=n$ in the case
of $H_1(C,n,c,c^*)$, and $m=dn$ for some positive integer $d$ in the
case of $H_2(C,n,c,c^*)$. If $j$ is a positive integer, then
$\nu^j=1$ if and only if $\xi^j=1$ and $c^*(g)^{j(1-n)}=1$ for any
$g\in C$. If the latter condition is satisfied, then
$(c^*)^{j(1-n)}=1$, or equivalently, $m|j(1-n)$, hence $n|j(1-n)$,
and then $n|j$, so the condition $\xi^j=1$ is automatically
satisfied. Thus the order of $\nu$ is the least positive integer $j$
such that $m|j(1-n)$. For any such $j$ we have $n|j$, so  $j=bn$ for
some integer $b$. Then $m|j(1-n)$ is equivalent to $d|b(n-1)$, and
also to $\frac{d}{(d,n-1)}|b\cdot \frac{n-1}{(d,n-1)}$. Since
$\frac{d}{(d,n-1)}$ and $\frac{n-1}{(d,n-1)}$ are relatively prime,
the latter condition is equivalent to $\frac{d}{(d,n-1)}|b$. We
conclude that the least such $b$ is $\frac{d}{(d,n-1)}$, and the
order of $\nu$ is
$$j=bn=\frac{dn}{(d,n-1)}=\frac{m}{(\frac{m}{n},n-1)}.$$

This shows that for $H_1(C,n,c,c^*)$, where $m=n$, the order of
$\nu$ is necessarily $n$, while for $H_2(C,n,c,c^*)$, the order may
be larger than $n$, depending on the value of $m$.

Now we show that for any $1\leq j<\frac{m}{(\frac{m}{n},n-1)}$,
$\nu^j$ is not an inner automorphism. Indeed, if it were, then there
would exist an invertible $u$ such that $\nu^j(r)=u^{-1}ru$ for any
$r$ in the Hopf algebra (which is either $H_1(C,n,c,c^*)$ or
$H_2(C,n,c,c^*)$). In particular, for any $g\in C$,
$c^*(g)^{j(1-n)}g=u^{-1}gu$. Applying the counit $\varepsilon$, one
gets $c^*(g)^{j(1-n)}=1$ for any $g\in C$, so $(c^*)^{j(1-n)}=1$.
Hence $m|j(1-n)$, and we have seen above that this implies that $j$
must be at least $\frac{m}{(\frac{m}{n},n-1)}$, a contradiction.

We conclude that if $A$ is a Hopf algebra of type $H_1(C,n,c,c^*)$
or $H_2(C,n,c,c^*)$, then the order of the Nakayama automorphism
$\nu$ of $A$ in the group of algebra automorphisms of $A$, as well
as the order of the class of $\nu$ in ${\rm Out}(A)$ (which is the
same with the order of $[A^*]$ in ${\rm Pic}(A)$) is
$\frac{m}{(\frac{m}{n},n-1)}$, where $m$ is the order of $c^*$ in
$C^*$. In the case of $H_1(C,n,c,c^*)$, where $m=n$, this order is
just $n$.

A particular case is when $C=C_n=<c>$ is the cyclic group of order
$n\geq 2$. Then for any linear character $c^*\in C^*$ such that
$c^*(c)$ is a primitive $n$th root of unity, $H_1(C,n,c,c^*)$ is a
Taft Hopf algebra. For such algebras, the order of the Nakayama
automorphism associated with a left integral as a Frobenius form is
computed in \cite[Example 5.9, page 614]{sy}.}
\end{example}

\begin{example} \label{exemplequantumplane}
{\rm  Let $q$ be a non-zero element of a field $K$, and let
$K_q[X,Y]$ be the quantum plane, which is the $K$-algebra generated
by $X$ and $Y$, subject to the relation $YX=qXY$. Let
$R_q=K_q[X,Y]/(X^2,Y^2)$, which has dimension 4, and a basis
$\mathcal{B}=\{ 1,x,y,xy\}$, where $x,y$ denote the classes of $X,Y$
in $R$. We have $x^2=y^2=0$ and $yx=qxy$. Denote by
$\mathcal{B}^*=\{ 1^*,x^*,y^*,(xy)^*\}$ the basis of $R_q^*$ dual to
$\mathcal{B}$. Then
$$1\rhu (xy)^*=(xy)^*, x\rhu (xy)^*=qy^*, y\rhu (xy)^*=x^*, (xy)\rhu (xy)^*=1^*,$$  showing that the
linear map from $R_q$ to $R_q^*$ which takes $r$ to $r\rhu (xy)^*$
is an isomorphism. Thus $R_q$ is a Frobenius algebra and
$\lambda=(xy)^*$ is a Frobenius form on $R_q$. Now since $(xy)^*\lhu
x=y^*$ and $(xy)^*\lhu y=qx^*$, the Nakayama automorphism associated
with $\lambda$ is $\nu\in {\rm Aut}(R_q)$ given by $\nu(x)=q^{-1}x,
\nu(y)=qy$. Then it is clear that the order of $\nu$ in the
automorphism group of $R_q$ is $n$ if $q$ is a primitive $n$th root
of unity in $K$, and it is infinite when no non-trivial power of $q$
is 1. This fact was observed in \cite[Example 10.7, page 417]{sy} by
using periodic modules with respect to actions of the syzygy and
Auslander-Reiten operators.

We show that if $t$ is a positive integer such that $q^t\neq 1$,
then $\nu^t$ is not even an inner automorphism. Indeed, if it were,
then $\nu^t(x)=u^{-1}xu$, or $ux=q^txu$ for some invertible $u\in
R_q$. If we write $u=a1+bx+cy+dxy$ with $a,b,c,d\in K$, this  means
that $ax+qcxy=q^t(ax+cxy)$, showing that $a=0$. But then $u$ cannot
be invertible, since $xyu=0$, a contradiction.

In conclusion, if $q$ is not a root of unity, $\nu$ has infinite
order in ${\rm Out}(R_q)$, and so does $[R_q^*]$ in ${\rm
Pic}(R_q)$, while if $q$ is a primitive $n$th root of unity, then
$[R_q^*]$ has order $n$ in ${\rm Pic}(R_q)$.

We end this example with the remark that Nakayama and Nesbitt
constructed in \cite[page 665]{nn} a class of examples of Frobenius
algebras which are not symmetric, presented in a matrix form. More
precisely, in the presentation of \cite[Example 16.66]{lam}, for any
non-zero elements $u,v\in K$, let $A_{u,v}$ be the subalgebra of
$M_4(K)$ consisting of all matrices of the
type $\left[ \begin{array}{cccc} a & b& c&d\\
0 & a & 0&uc\\
0&0&a&vb\\
0&0&0&a\end{array} \right]$, where $a,b,c,d\in K$. Then $A_{u,v}$ is
Frobenius for any $u,v\in K^{\times}$, and it is symmetric if and
only if $u=v$.  $A_{u,v}$ has a basis consisting of the elements
$$I_4,\; x=E_{12}+vE_{34},\; y=E_{13}+uE_{24},\; z=E_{14},$$
where $E_{ij}$ denote the usual matrix units in $M_4(K)$, and they
satisfy the relations
$$x^2=0,\; y^2=0,\; xy=uz,\; yx=vz.$$
These show that in fact, $A_{u,v}$ is isomorphic to the quotient
$R_{u^{-1}v}$ of the quantum plane. }
\end{example}

If $H$ is a finite-dimensional Hopf algebra, let $t\in H$ be a
non-zero left integral in $H$, i.e., $ht=\varepsilon (h)t$ for any
$h\in H$, where $\varepsilon$ is the counit of $H$. As the space of
left integrals is one-dimensional and $th$ is a left integral for
any $h\in H$, there is a linear map $\mathcal{G}:H\ra K$ such that
$th=\mathcal{G}(h)t$ for any $h\in H$. In fact, $\mathcal{G}$ is an
algebra morphism, thus an element of the group $G(H^*)$ of grouplike
elements of $H^*$. $\mathcal{G}$ is called the distinguished
group-like element of $H^*$, and also the right modular element of
$H^*$.

\begin{theorem} \label{ordinH*}
Let $H$ be a finite-dimensional Hopf algebra with antipode $S$ and
counit $\varepsilon$, and let $\mathcal{G}$ be the modular element
in $H^*$. If $n$ is a positive integer, then $[H^*]^n=1$ in ${\rm
Pic}(H)$ if and only if $S^{2n}$ is inner and
$\mathcal{G}^n=\varepsilon$. As a consequence, the order of $[H^*]$
in ${\rm Pic}(H)$ is the least common multiple of the order of the
class of $S^2$ in ${\rm Out}(H)$ and the order of $\mathcal{G}$ in
$G(H^*)$.
\end{theorem}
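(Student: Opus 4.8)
The plan is to reduce everything to the Nakayama automorphism of $H$ and then analyse its powers. Since $H$ is Frobenius with Frobenius form a nonzero left integral $\lambda$, Proposition \ref{dualalpha} gives $H^*\simeq {_1H_\nu}$ for the associated Nakayama automorphism $\nu$; consequently $[H^*]^n=[{_1H_{\nu^n}}]$ in ${\rm Pic}(H)$, and by the bullet points recalled in the preliminaries this class is trivial exactly when $\nu^n$ is inner. Moreover, by Proposition \ref{dualalpha} the order of $[H^*]$ equals the order of the class of $\nu$ in ${\rm Out}(H)$, so only this outer class matters. Thus the theorem reduces to proving that $\nu^n$ is inner if and only if $S^{2n}$ is inner and $\mathcal{G}^n=\varepsilon$; the ``least common multiple'' statement is then immediate, because $[H^*]^n=1$ will hold iff both the order of the class of $S^2$ in ${\rm Out}(H)$ and the order of $\mathcal{G}$ in $G(H^*)$ divide $n$.

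The key input is the explicit form of the Nakayama automorphism of a finite dimensional Hopf algebra with respect to the integral Frobenius form: modulo an inner automorphism, $\nu(h)=\sum S^2(h_1)\mathcal{G}(h_2)$, where $\sum h_1\ot h_2$ is the comultiplication. Writing $w$ for the winding map $w(h)=\sum h_1\mathcal{G}(h_2)$, which is an algebra automorphism of $H$ because $\mathcal{G}\in G(H^*)$ is an algebra map, this reads $\nu=S^2\circ w$ up to inner (recall $S^2$ is an algebra automorphism, $S$ being an anti-automorphism). Establishing this formula is the main obstacle; I would either cite it or derive it from the standard integral identities by verifying the defining relation $\lambda(ab)=\lambda(b\nu(a))$. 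The computation in Example \ref{exempleordinR^*} serves as a consistency check: there $w(g)=\mathcal{G}(g)g$, $S^2(g)=g$ and $S^2(x)=\xi x$, which recover precisely $\nu(g)=c^*(g)^{1-n}g$ and $\nu(x)=\xi x$ once one knows $\mathcal{G}(g)=c^*(g)^{1-n}$ and $\mathcal{G}(x)=0$.

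Two structural facts about the representative $S^2\circ w$ drive the argument. First, $S^2$ and $w$ commute: since $\mathcal{G}$ is grouplike one has $\mathcal{G}\circ S=\mathcal{G}^{-1}$, hence $\mathcal{G}\circ S^2=\mathcal{G}$, and a short Sweedler computation then gives $S^2\circ w=w\circ S^2$. Therefore $(S^2w)^n=S^{2n}\circ w^n$, and composing right windings shows $w^n$ is the winding attached to the convolution power $\mathcal{G}^n$, i.e. $w^n(h)=\sum h_1\mathcal{G}^n(h_2)$. Second, applying the counit annihilates the $S^{2n}$ part and reads off $\mathcal{G}^n$: since $\varepsilon\circ S=\varepsilon$ we get $\varepsilon\big((S^2w)^n(h)\big)=\varepsilon(w^n(h))=\mathcal{G}^n(h)$ for every $h\in H$.

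With these in hand the equivalence is quick, working with the representative $S^2w$ of the class of $\nu$. If $S^{2n}$ is inner and $\mathcal{G}^n=\varepsilon$, then $w^n={\rm id}$, so $(S^2w)^n=S^{2n}$ is inner. Conversely, suppose $(S^2w)^n$ is inner, say equal to $h\mapsto u^{-1}hu$ for an invertible $u$. Applying $\varepsilon$ and using that it is an algebra map yields $\mathcal{G}^n(h)=\varepsilon(u^{-1}hu)=\varepsilon(h)$ for all $h$, that is $\mathcal{G}^n=\varepsilon$; this forces $w^n={\rm id}$, whence $(S^2w)^n=S^{2n}$, which is then inner. This proves the characterization, and the order statement follows as indicated in the first paragraph. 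I expect the only genuinely delicate point to be the justification of the Nakayama formula; everything after it is routine Hopf-algebra bookkeeping, and the conclusion is insensitive to the precise convention (for instance $\mathcal{G}$ versus $\mathcal{G}^{-1}$, $S^2$ versus $S^{-2}$, or an additional inner twist), since none of these alters either the order of $\mathcal{G}$ in $G(H^*)$ or the outer class of $S^2$.
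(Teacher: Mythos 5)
Your proposal is correct and follows essentially the same route as the paper's proof: both reduce the statement to deciding when $\nu^n$ is inner, invoke Radford's formula $\nu=S^2\ell_{\mathcal{G}}$ (your winding map $w$ is exactly the paper's $\ell_{\mathcal{G}}$), prove the commutation $S^2\ell_{\mathcal{G}}=\ell_{\mathcal{G}}S^2$ via $\mathcal{G}\circ S^2=\mathcal{G}$, and recover $\mathcal{G}^n=\varepsilon$ by applying the counit to the inner automorphism, after which $\nu^n=S^{2n}$. The one step you deferred --- justifying the Nakayama formula --- is handled in the paper exactly as you suggest, by citation to \cite[Theorem 3(a)]{radford} in the reformulation of \cite[Proposition 12.8]{lorenz} (where the formula holds on the nose for the integral Frobenius form, so your ``up to inner'' hedge is not even needed).
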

\begin{proof}
Let $\lambda$ be a non-zero left integral on $H$, which is a
Frobenius form on $H$, and let $\nu$ be the associated Nakayama
automorphism. By \cite[Theorem 3(a)]{radford}, in the reformulation
of \cite[Proposition 12.8]{lorenz},  $\nu(h)=\sum
\mathcal{G}(h_2)S^2(h_1)$ for any $h\in H$. Let
$\ell_{\mathcal{G}}:H\ra H$ be the linear map defined by
$\ell_{\mathcal{G}}(h)=\mathcal{G}\rhu h=\sum \mathcal{G}(h_2)h_1$.
We have $\nu=S^2 \ell_{\mathcal{G}}$. We note that
$\mathcal{G}S^2=\mathcal{G}$. Indeed, it is clear that
$\mathcal{G}S=S^*(\mathcal{G})=\mathcal{G}^{-1}$, since the dual map
$S^*$ of $S$ is the antipode of the dual Hopf algebra $H^*$, and it
takes a group-like element to its inverse. Now we have \bea
(\ell_{\mathcal{G}}S^2)(h)&=&\mathcal{G}\rhu S^2(h)\\
&=&\sum \mathcal{G}(S^2(h_2))S^2(h_1)\\
&=&\sum \mathcal{G}(h_2)S^2(h_1)\\
&=&S^2(\mathcal{G}\rhu h)\\
&=&(S^2\ell_{\mathcal{G}})(h), \eea showing that
$\ell_{\mathcal{G}}S^2=S^2\ell_{\mathcal{G}}$. Since $\rhu$ is a
left action, we have $(\ell_{\mathcal{G}})^n=\ell_{\mathcal{G}^n}$
for any positive integer $n$, and it follows that
$\nu^n=S^{2n}\ell_{\mathcal{G}^n}$. Now if
$\ell_{\mathcal{G}^n}=\varepsilon$ and  $S^{2n}$ is inner, then
$\nu^n=S^{2n}$ is inner, so
$[H^*]^n=[{_1H_\nu}]^n=[{_1H_{\nu^n}}]=1$ in ${\rm Pic}(H)$.
Conversely, if $[H^*]^n=1$, then $\nu^n$ is inner. Let
$\nu^n(h)=u^{-1}hu$ for some invertible $u\in H$. Then
$S^{2n}(\ell_{\mathcal{G}^n}(h))=u^{-1}hu$ for any $h\in H$, and
applying $\varepsilon$  and using that $\varepsilon S=\varepsilon$,
we obtain $\varepsilon
(\ell_{\mathcal{G}^n}(h))=\varepsilon(u^{-1})\varepsilon(h)\varepsilon(h)=\varepsilon(h)$.
As $\varepsilon (\ell_{\mathcal{G}^n}(h))=\varepsilon (\sum
\mathcal{G}^n(h_2)h_1)=\mathcal{G}^n(h)$, we get
$\mathcal{G}^n=\varepsilon$. Consequently, $\nu^n=S^{2n}$, so
$S^{2n}$ is inner.
\end{proof}

We note that in the particular case where $n=1$, the previous
Theorem says that a finite-dimensional Hopf algebra $H$ is a
symmetric algebra if and only if $\mathcal{G}=\varepsilon$, i.e.,
$H$ is unimodular, and $S^2$ is inner. This is a result of
\cite{os}, see also \cite[Theorem 12.9]{lorenz}.

\section{Picard groups of quasi-Frobenius algebras}
\label{sectionpicardquasi}

We start with some general considerations. Let $R$ and $S$ be two
Morita equivalent $K$-algebras. Let
$(R,S,{_RP_S},{_SQ_R},P\ot_SQ\stackrel{f}{\ra}
R,Q\ot_RP\stackrel{g}{\ra} S)$ be a strict Morita context connecting
$R$ and $S$, i.e., $P$ and $Q$ are bimodules as the indices
indicate, $f$ is an isomorphism of $R$-bimodules, $g$ is an
isomorphism of $S$-bimodules, and denoting $f(p\ot_Sq)=[p,q]$ and
$g(q\ot_Rp)=(q,p)$, the conditions $[p,q]p'=p(q,p')$ and
$(q,p)q'=q[p,q']$ hold for any $p,p'\in P, q,q'\in Q$.

As explained in \cite[pages 301-302]{takeuchi}, a $K$-linear
monoidal equivalence $F=Q\ot_R (-)\ot_RP$ is induced between the
monoidal categories of $R$-bimodules and $S$-bimodules, with
quasi-inverse $G=P\ot_S(-)\ot_SQ$.
\begin{proposition} \label{3.1}
The mapping $[M]\mapsto [F(M)]$ is an isomorphism between the groups
${\rm Pic}(R)$ and ${\rm Pic}(S)$. In particular, the order of $[M]$
in ${\rm Pic}(R)$ is equal to the order of $[Q\ot_RM\ot_RP]$ in
${\rm Pic}(S)$.
\end{proposition}
\begin{proof}
Since $F$ is a monoidal equivalence, we see that if $M$ is an
invertible $R$-bimodule, then $F(M)$ is an invertible $S$-bimodule,
and moreover, the mapping $[M]\mapsto [F(M)]$ is a group morphism.
Its inverse takes $[N]$ to $[G(N)]$ for any invertible $S$-bimodule
$N$.
\end{proof}

\begin{proposition} \label{3.2}
Assume that $R$ and $S$ are Morita equivalent finite-dimensional
$K$-algebras, and let $F$ be the monoidal equivalence between the
categories of $R$-bimodules and $S$-bimodules described above. Then
there is an isomorphism of $S$-bimodules $F(R^*)\simeq S^*$. In
particular, the order of $[R^*]$ in ${\rm Pic}(R)$ is equal to the
order of $[S^*]$ in ${\rm Pic}(S)$.
\end{proposition}
\begin{proof}
We first list some basic facts. We denote by $_A\mathcal{M}_B$ the
category of left $A$, right $B$-bimodules, $\mathrm{Hom}_{A-}$ means
morphisms of left $A$-modules, while $\mathrm{Hom}_{-A}$ means
morphisms of right $A$-modules. (i) below is the tensor-Hom
adjunction, and (ii) is the duality property, where the structure of
the involved objects is enriched to bimodules.
(iii) is basic Morita theory, and so is (iv), with the mention that the isomorphism is also of $S$-bimodules.
\\[2mm]
(i) If $M\in {_A\mathcal{M}_B}$ and $N\in {_B\mathcal{M}_C}$ are
finite dimensional, then $(M\ot _BN)^*\simeq
\mathrm{Hom}_{-B}(M,N^*)$ in
$_C\mathcal{M}_A$.\\
(ii) If $U\in {_A\mathcal{M}_B}$ and $V\in {_A\mathcal{M}_C}$ are
finite dimensional, then $\mathrm{Hom}_{A-}(U,V)\simeq
\mathrm{Hom}_{-A}(V^*,U^*)$ in $_B\mathcal{M}_C$.\\
(iii) $\mathrm{Hom}_{R-}(P,R)\simeq Q$ in $_S\mathcal{M}_R$.\\
(iv) $\mathrm{Hom}_{-R}(Q,Q)\simeq S$ in $_S\mathcal{M}_S$.\\

Using these, we have the following isomorphisms of $S$-bimodules
\bea (Q\ot_RR^*\ot_RP)^*&\simeq & \mathrm{Hom}_{-R}(Q,(R^*\ot_R
P)^*) \;\;\;\;\;\;\;\;\; (\mathrm{by}\; \mathrm{(i)}\;
\mathrm{for}\;
M={_SQ_R}, N={_R(R^*\ot_RP)_S})\\
&\simeq& \mathrm{Hom}_{-R}(Q,\mathrm{Hom}_{-R}(R^*,P^*))\;\;
(\mathrm{by}\; \mathrm{(i)}\; \mathrm{for}\; M={_RR^*_R},
N={_RP_S})\\
&\simeq& \mathrm{Hom}_{-R}(Q,\mathrm{Hom}_{R-}(P,R))\;\;\;\;\;\;
(\mathrm{by}\; \mathrm{(ii)}\; \mathrm{for}\; U={_RP_S}, V={_RR_R})\\
&\simeq & \mathrm{Hom}_{-R}(Q,Q)
\;\;\;\;\;\;\;\;\;\;\;\;\;\;\;\;\;\;\;\;\;\;\;\;\; (\mathrm{by}\;
\mathrm{(iii)})\\
&\simeq&S
\;\;\;\;\;\;\;\;\;\;\;\;\;\;\;\;\;\;\;\;\;\;\;\;\;\;\;\;\;\;\;\;\;\;\;\;\;\;\;\;\;\;\;\;
(\mathrm{by}\; \mathrm{(iv)})\eea Taking duals, we find that
$F(R^*)=Q\ot_RR^*\ot_RP\simeq S^*$ as $S$-bimodules. \end{proof}

\begin{remark}
One of the referees indicated us an alternative way for proving this
by using basic results on the Nakayama functor. Thus if we denote by
$T=Q\ot_R(-)$ the equivalence functor between the categories of left
$R$-modules and left $S$-modules, and by $L=P\ot_S(-)$ its
quasi-inverse, we have isomorphisms
$$F(R^*)\ot_S(-)\simeq T\circ
\mathrm{N}^\mathrm{r}_{R-\mathrm{mod}}\circ L\simeq T\circ L\circ
\mathrm{N}^\mathrm{r}_{S-\mathrm{mod}}\simeq S^*\ot_S(-)$$ in the
category of right exact linear functors from $S-\mathrm{mod}$ to
$R-\mathrm{mod}$ (see \cite[Lemma 3.15 and Theorem 3.18]{fss}),
where $\mathrm{N}^\mathrm{r}$ denotes the right exact Nakayama
functor introduced in \cite{fss}. It follows that $F(R^*)$ and $S^*$
are isomorphic.
\end{remark}

Now if $R$ is a finite-dimensional quasi-Frobenius algebra, we
consider a basic algebra $S$ of $R$. As explained in \cite[page
172]{sy}, $S$ can be constructed as follows. Take a complete system
of orthogonal primitive idempotents in $R$, and let $e$ be the sum
of a system of representatives of the isomorphism types of the
idempotents in this system. Then $S=eRe$ is a basic algebra of $R$;
it is Frobenius and Morita equivalent to $R$, see \cite[Theorem
6.16, page 173, and Corollary 3.11, page 351]{sy}. A basic algebra
of $R$ is not uniquely determined, but it is unique up to an
isomorphism. As a consequence of the above discussion, we obtain:

\begin{corollary} \label{3.4}
If $R$ is a finite-dimensional quasi-Frobenius algebra, and $S$ is a
basic algebra of $R$, then ${\rm Pic}(R)\simeq {\rm Pic}(S)$ and the
order of $[R^*]$ in ${\rm Pic}(R)$ is equal to the order of the
class of the Nakayama automorphism of $S$ (with respect to some
Frobenius form) in ${\rm Out}(S)$.
\end{corollary}

\section{The structure of $\mathcal{R}$ and $\mathcal{R}^*$, and the Picard group of $\mathcal{R}$} \label{sectionconstruction}

Let $\mathcal{R}$ be the $K$-algebra presented in the Introduction.
It has basis ${\bf B}=\{ E, X_1,X_2,Y_1,Y_2\}\cup \{F_{ij}|1\leq
i,j\leq 2\}$, and relations

 \bea E^2=E,&
F_{ij}F_{jr}=F_{ir},\\  EX_{i}=X_{i},& X_{i}F_{ir}=X_{r},\\
F_{ij}Y_{j}=Y_{i},& Y_{i}E=Y_{i} \eea for any $1\leq i,j,r\leq 2$,
 and any other product of two elements of $\bf B$ is zero.

Let \bea \mathcal{V}_1&=&<X_1,F_{11},F_{21}>\\
\mathcal{V}'_1&=&<X_2,F_{12},F_{22}>\\
\mathcal{V}_2&=&<Y_1,Y_2,E>. \eea Then
$\mathcal{R}=\mathcal{V}_1\oplus \mathcal{V}'_1\oplus \mathcal{V}_2$
is a decomposition of $\mathcal{R}$ into a direct sum of
indecomposable left $\mathcal{R}$-modules, and $\mathcal{V}_1\simeq
\mathcal{V}'_1\simeq{\hspace{-3.5mm}/}$$\hspace{2mm} \mathcal{V}_2$.
Indeed, right multiplication by $F_{12}$ is an isomorphism from
$\mathcal{V}_1$ to $\mathcal{V}'_1$, with inverse the right
multiplication by $F_{21}$, while  $\mathcal{V}_1$ and
$\mathcal{V}_2$ are not isomorphic since they have different
annihilators.

Similarly, a decomposition of $\mathcal{R}$ into a direct sum of
indecomposable right $\mathcal{R}$-modules is
$\mathcal{R}=\mathcal{U}_1\oplus \mathcal{U}_2\oplus
\mathcal{U}'_2$, with $\mathcal{U}_2\simeq
\mathcal{U}'_2\simeq{\hspace{-3.5mm}/}$$\hspace{2mm} \mathcal{U}_1$,
where
\bea \mathcal{U}_1&=&<E,X_1,X_2>\\
\mathcal{U}_2&=&<F_{11},F_{12},Y_1>\\
\mathcal{U}'_2&=&<F_{21},F_{22},Y_2>. \eea

The quotient algebra of $\mathcal{R}$ by the nilpotent ideal
$<X_1,X_2,Y_1,Y_2>$ is isomorphic to $K\times M_2(K)$, so the
Jacobson radical $J(\mathcal{R})=<X_1,X_2,Y_1,Y_2>$. Then
$\mathcal{U}_1J(\mathcal{R})=<X_1,X_2>$ and
$\mathcal{U}_2J(\mathcal{R})=<Y_1>$, so the isomorphism types of
simple right $\mathcal{R}$-modules are
$S_1=\mathcal{U}_1/\mathcal{U}_1J(\mathcal{R})\simeq
<Y_1>=\mathrm{soc}(\mathcal{U}_2)$ and
$S_2=\mathcal{U}_2/\mathcal{U}_2J(\mathcal{R})\simeq
<X_1,X_2>=\mathrm{soc}(\mathcal{U}_1)$, where $\mathrm{soc}(U)$
denotes the socle of the module $U$. These show that the
multiplicity $\mu (S_i,\mathcal{U}_j)$ of the simple module $S_i$ in
a composition series of $\mathcal{U}_j$ is 1 for any $1\leq i,j\leq
2$. Clearly, $\mathrm{End}_\mathcal{R}(S_i)\simeq K$ for each $i$.

Now we look at $\mathcal{R}^*=Hom_K(\mathcal{R},K)$, with the
$\mathcal{R}$-bimodule structure induced by the one of
$\mathcal{R}$; we denote by $\rhu$ and $\lhu$ the left and right
actions of $\mathcal{R}$ on $\mathcal{R}^*$. Denote by ${\bf B}^*=\{
E^*, F_{ij}^*,X_i^*,Y_j^*| 1\leq i,j\leq 2\}$ the basis of
$\mathcal{R}^*$ dual to $\bf B$. On basis elements, the left action
of $\mathcal{R}$ on $\mathcal{R}^*$ is

$$\begin{array}{llll} E\rhu E^*=E^*,& E\rhu F_{ij}^*=0,& E\rhu X_i^*=0,& E\rightarrow
Y_i^*=Y_i^*,\\
F_{ij}\rhu E^*=0,& F_{ij}\rhu F_{rp}^*=\delta_{jp}F_{ri}^*,&
F_{ij}\rhu
X_r^*=\delta_{jr}X_i^*,& F_{ij}\rhu Y_r^*=0,\\
X_i\rhu E^*=0,& X_i\rhu F_{rj}^*=0,& X_i\rhu X_j^*=\delta_{ij}E^*,&
X_i\rhu Y_j^*=0,\\
Y_i\rhu E^*=0,& Y_i\rhu F_{rj}^*=0,& Y_i\rhu X_j^*=0,& Y_i\rhu
Y_j^*=F_{ji}^*,
 \end{array}$$
for any $1\leq i,j,r,p\leq 2$.

We will identify $\mathcal{U}_1^*$ with $<E^*,X_1^*,X_2^*>$ inside
$\mathcal{R}^*$, and similarly for the duals of $\mathcal{U}_2,
\mathcal{U}'_2, \mathcal{V}_1, \mathcal{V}'_1, \mathcal{V}_2$.

\begin{lemma} \label{lemaduale}
$\mathcal{U}_1^*\simeq \mathcal{V}_1$ and  $\mathcal{U}_2^*\simeq
\mathcal{V}_2$ as left $\mathcal{R}$-modules. Consequently,
$\mathcal{V}_1^*\simeq \mathcal{U}_1$ and  $\mathcal{V}_2^*\simeq
\mathcal{U}_2$ as right $\mathcal{R}$-modules, $\mathcal{R}^*\simeq
\mathcal{V}_1\oplus \mathcal{V}_2^2$ as left $\mathcal{R}$-modules
and $\mathcal{R}^*\simeq \mathcal{U}_1^2\oplus \mathcal{U}_2$ as
right $\mathcal{R}$-modules.
\end{lemma}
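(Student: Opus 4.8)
The plan is to produce the two left-module isomorphisms explicitly on the given bases, and then obtain every remaining assertion formally from $K$-linear duality together with the module decompositions of $\mathcal{R}$ already recorded before Proposition \ref{propdimensiuni}.

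First I would treat $\mathcal{U}_1^*\simeq \mathcal{V}_1$. Since $\mathcal{R}=\mathcal{U}_1\oplus \mathcal{U}_2\oplus \mathcal{U}'_2$ as right $\mathcal{R}$-modules, the subspace $\langle E^*,X_1^*,X_2^*\rangle$ is exactly the annihilator of the right submodule $\mathcal{U}_2\oplus \mathcal{U}'_2$; being the annihilator of a right submodule, it is automatically a left $\mathcal{R}$-submodule of $\mathcal{R}^*$, canonically identified with $\mathcal{U}_1^*$ as a left module. I would then define the linear map $\mathcal{U}_1^*\to \mathcal{V}_1$ by $E^*\mapsto X_1$, $X_1^*\mapsto F_{11}$, $X_2^*\mapsto F_{21}$, and check it respects the left action by comparing the effect of each generator $E,X_i,F_{ij},Y_i$ on the two sides, reading off the left-action table for $\mathcal{R}^*$ on one side and the defining relations of $\mathcal{R}$ on the other. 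For instance $X_1\rhu X_1^*=E^*$ matches $X_1F_{11}=X_1$, and $F_{21}\rhu X_1^*=X_2^*$ matches $F_{21}F_{11}=F_{21}$; the rest of the entries are of the same nature. As it carries a basis to a basis, it is an isomorphism.

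Second, in precisely the same way I would verify $\mathcal{U}_2^*=\langle F_{11}^*,F_{12}^*,Y_1^*\rangle\simeq \mathcal{V}_2$ via $Y_1^*\mapsto E$, $F_{11}^*\mapsto Y_1$, $F_{12}^*\mapsto Y_2$; here the decisive correspondences are $E\rhu Y_1^*=Y_1^*$ against $E\cdot E=E$, and $Y_i\rhu Y_1^*=F_{1i}^*$ against $Y_i\cdot E=Y_i$, supplemented by the $F_{ij}$-action $F_{ij}\rhu F_{rp}^*=\delta_{jp}F_{ri}^*$ matched against $F_{ij}Y_j=Y_i$.

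Finally I would deduce the ``consequently'' part formally. The $K$-linear dual is a contravariant equivalence interchanging finite dimensional left and right $\mathcal{R}$-modules, with $M^{**}\simeq M$ (as already used for bimodules in the preliminaries). Dualizing $\mathcal{U}_1^*\simeq \mathcal{V}_1$ gives $\mathcal{V}_1^*\simeq \mathcal{U}_1^{**}\simeq \mathcal{U}_1$ as right modules, and likewise $\mathcal{V}_2^*\simeq \mathcal{U}_2$. Using $\mathcal{R}\simeq \mathcal{U}_1\oplus \mathcal{U}_2^2$ as a right module and $\mathcal{R}\simeq \mathcal{V}_1^2\oplus \mathcal{V}_2$ as a left module, dualizing the right-module decomposition yields $\mathcal{R}^*\simeq \mathcal{U}_1^*\oplus (\mathcal{U}_2^*)^2\simeq \mathcal{V}_1\oplus \mathcal{V}_2^2$ as left modules, and dualizing the left-module decomposition yields $\mathcal{R}^*\simeq (\mathcal{V}_1^*)^2\oplus \mathcal{V}_2^*\simeq \mathcal{U}_1^2\oplus \mathcal{U}_2$ as right modules. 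The only genuine obstacle is bookkeeping: keeping straight on which side each dual lives and ensuring the annihilator/dual-basis identification is compatible with the action; once the two explicit isomorphisms are verified, everything else is purely formal.
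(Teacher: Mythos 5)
Your proposal is correct and follows essentially the same route as the paper: the paper's proof exhibits exactly the same two isomorphisms (written in the inverse direction, $X_1\mapsto E^*$, $F_{11}\mapsto X_1^*$, $F_{21}\mapsto X_2^*$ and $E\mapsto Y_1^*$, $Y_1\mapsto F_{11}^*$, $Y_2\mapsto F_{12}^*$) and verifies them against the action tables, leaving the ``consequently'' part to the same formal duality argument you spell out. Your extra remarks (the annihilator identification of $\mathcal{U}_1^*$ inside $\mathcal{R}^*$ and the explicit dualization of the decompositions) only make explicit what the paper leaves implicit.
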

\begin{proof}
It follows from the action table above that the linear map taking
$X_1$ to $E^*$, $F_{11}$ to $X_1^*$ and $F_{21}$ to $X_2^*$ is an
isomorphism of left $\mathcal{R}$-modules from $\mathcal{V}_1$ to
$\mathcal{U}_1^*$. Also, the mapping $Y_1\mapsto F_{11}^*$,
$Y_2\mapsto F_{12}^*$, $E\mapsto Y_1^*$ defines an isomorphism
$\mathcal{V}_2\simeq \mathcal{U}_2^*$.
\end{proof}

The proof of the following Corollary was suggested by one of the
referees. Our initial proof was more computational.

\begin{corollary} \label{propdimensiuni}
${\rm dim}_K\, (\mathcal{U}_i\otimes_\mathcal{R} \mathcal{V}_j)=1$
for any $1\leq i,j\leq 2$.
\end{corollary}
\begin{proof}
By the tensor-Hom adjunction and taking into account Lemma
\ref{lemaduale}, we have linear isomorphisms
$$(\mathcal{U}_i\otimes_\mathcal{R} \mathcal{V}_j)^*=\mathrm{Hom}_K(\mathcal{U}_i\otimes_\mathcal{R}
\mathcal{V}_j,K)\simeq
\mathrm{Hom}_\mathcal{R}(\mathcal{U}_i,\mathcal{V}_j^*)\simeq
\mathrm{Hom}_\mathcal{R}(\mathcal{U}_i,\mathcal{U}_j)$$ for any
$i,j$. Now the result follows since $\mathcal{U}_i$ is a projective
cover of $S_i$, and we have by \cite[Proposition 2.8]{lorenz} that
$$\mathrm{dim}_K(\mathrm{Hom}_\mathcal{R}(\mathcal{U}_i,\mathcal{U}_j))=\mu(S_i,\mathcal{U}_j)\mathrm{dim}_K(\mathrm{End}_\mathcal{R}(S_i))=1.$$

\end{proof}

\begin{remark} \label{remarcadimensiuni}
{\rm The only non-zero tensor monomials formed with elements of $\bf
B$ in tensor products of the form $\mathcal{U}_i\otimes_\mathcal{R}
\mathcal{V}_j$ are: $E\ot _\mathcal{R}X_1=X_1\ot
_\mathcal{R}F_{11}=X_2\ot _\mathcal{R}F_{21}$ in
$\mathcal{U}_1\otimes_\mathcal{R} \mathcal{V}_1$,  $F_{11}\ot
_\mathcal{R}F_{11}=F_{12}\ot _\mathcal{R}F_{21}$ in
$\mathcal{U}_2\otimes_\mathcal{R} \mathcal{V}_1$,  $Y_1\ot
_\mathcal{R}E=F_{11}\ot _\mathcal{R}Y_1=F_{12}\ot _\mathcal{R}Y_2$
in $\mathcal{U}_2\otimes_\mathcal{R} \mathcal{V}_2$, and  $E\ot
_\mathcal{R}E$ in $\mathcal{U}_1\otimes_\mathcal{R} \mathcal{V}_2$.
Indeed, it is straightforward to check that any other such tensor
monomial in some $\mathcal{U}_i\otimes_\mathcal{R} \mathcal{V}_j$ is
zero. Then the above mentioned tensor monomials must be non-zero
since each $\mathcal{U}_i\otimes_\mathcal{R} \mathcal{V}_j$ has
dimension 1.}
\end{remark}

Now let $\mathcal{S}=e\mathcal{R}e$ be a basic algebra of
$\mathcal{R}$, where $e=E+F_{11}$, so then $\mathcal{S}=<E,F_{11},
X_1,Y_1>$.

\begin{proposition}
The order of the class of a Nakayama automorphism of $\mathcal{S}$
in ${\rm Out}(\mathcal{S})$ is $2$.
\end{proposition}
\begin{proof}
For the simplicity of notation, we renote just for this proof
$F_{11}=F, X_1=X,Y_1=Y$. Also denote by $
\mathcal{B}^*=\{E^*,F^*,X^*,Y^*\}$ the dual basis of
$\mathcal{B}=\{E,F,X,Y\}$ in $\mathcal{S}^*$. A direct computation
shows that the only non-zero elements of the form $b\ra b^*$, where
$b\in \mathcal{B}$, $b^*\in \mathcal{B}^*$, and $\ra$ denotes the
left action of $\mathcal{S}$ on $\mathcal{S}^*$, are
$$E\ra E^*=E^*, F\ra F^*=F^*, F\ra X^*=X^*, X\ra X^*=E^*,E\ra
Y^*=Y^*,Y\ra Y^*=F^*,$$ while the only non-zero elements of the form
$b^*\leftarrow b$, with $b\in \mathcal{B}$, $b^*\in \mathcal{B}^*$,
are
$$E^*\leftarrow E=E^*, F^*\leftarrow F=F^*, X^*\leftarrow
E=X^*,X^*\leftarrow X=F^*, Y^*\leftarrow F=Y^*,Y^*\leftarrow
Y=E^*.$$ Using these relations, we see that $\lambda=X^*+Y^*$ is a
Frobenius form of $\mathcal{S}$, and the corresponding Nakayama
automorphism is $\nu:\mathcal{S}\ra \mathcal{S}$ given by
$$\nu(E)=F,\nu(F)=E,\nu(X)=Y,\nu(Y)=X,$$
thus $\nu^2=Id$. On the other hand, $\nu$ is not an inner
automorphism, otherwise $\mathcal{S}$ would be a symmetric algebra,
and then so would be $\mathcal{R}$, as it is Morita equivalent to
$\mathcal{S}$; this is a contradiction. Alternatively, a direct
simple argument can be given to show that $\nu$ is not inner.
Indeed, if $u\in \mathcal{S}$ would be invertible such that
$u^{-1}Eu=F$, then writing $u=\alpha E+\beta F+\gamma X+\delta Y$
for some scalars $\alpha,\beta,\gamma,\delta$, we derive from
$Eu=uF$ that $\alpha=\beta=0$, and then $u^2=(\gamma X+\delta
Y)^2=0$, so $u$ could not be invertible.
\end{proof}

\begin{corollary} \label{orderdualNakayama}
The order of $\mathcal{R}^*$ in ${\rm Pic}(\mathcal{R})$ is 2, thus
there is an isomorphism of $\mathcal{R}$-bimodules
$\varphi:\mathcal{R}^*\ot_\mathcal{R}\mathcal{R}^*\ra \mathcal{R}$.
\end{corollary}
\begin{proof}
It follows from Corollary \ref{3.4} and the previous Proposition.
\end{proof}

\begin{remark} \label{remarcavarphi}
It is possible to obtain a direct computational proof of the
previous Corollary, and an explicit isomorphism
$\varphi:\mathcal{R}^*\ot_\mathcal{R}\mathcal{R}^*\ra \mathcal{R}$,
 by taking
into account Lemma \ref{lemaduale}, Remark \ref{remarcadimensiuni}
the table with the left $\mathcal{R}$-action on $\mathcal{R}^*$, and
a similar one with the right action. One can obtain a basis of
$\mathcal{R}^*\ot _\mathcal{R}\mathcal{R}^*$  consisting of the
elements $$ \mathcal{E}=Y_1^*\ot _\mathcal{R}X_1^*=Y_2^*\ot
_\mathcal{R}X_2^*,  \mathcal{F}_{ij}=X_i^*\ot _\mathcal{R}Y_j^*,
\mathcal{X}_i=E^*\ot _\mathcal{R}Y_i^*, \mathcal{Y}_i=F_{1i}^*\ot
_\mathcal{R}X_1^*,$$  where $1\leq i,j\leq 2$.

Moreover, the linear map $\varphi:\mathcal{R}^*\ot
_\mathcal{R}\mathcal{R}^*\ra \mathcal{R}$ given by $\varphi
(\mathcal{E})=E, \, \varphi (\mathcal{F}_{ij})=F_{ij},\,
\varphi(\mathcal{X}_i)=X_i,\, \varphi(\mathcal{Y}_i)=Y_i$ for any
$1\leq i,j\leq 2$, is an isomorphism of $\mathcal{R}$-bimodules.
\end{remark}

We aim to compute the Picard group of $\mathcal{R}$. One possibility
is to compute first the Picard group of the Frobenius algebra
$\mathcal{S}$, and then to use Corollary \ref{3.4}. As determining
the invertible bimodules and the group of exterior automorphisms of
$\mathcal{S}$ is of comparable difficulty to determining the ones of
$\mathcal{R}$, we prefer to look directly at the Picard group of
$\mathcal{R}$. This will also make the description more explicit.

\begin{lemma} \label{invertibleleftmodule}
Let $P$ be an invertible $\mathcal{R}$-bimodule. Then $P$ is
isomorphic either to $\mathcal{V}_1\oplus \mathcal{V}_2^2$ or to
$\mathcal{V}_1^2\oplus \mathcal{V}_2$ as a left
$\mathcal{R}$-module.
\end{lemma}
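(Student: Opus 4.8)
The plan is to exploit characterization (3) of invertibility: an invertible bimodule $P$ is a finitely generated projective generator as a left $\mathcal{R}$-module, and $\omega:\mathcal{R}\to {\rm End}(_\mathcal{R}P)$ is a ring isomorphism. First I would extract what these two facts force on the left-module structure. Since $\mathcal{R}=\mathcal{V}_1\oplus\mathcal{V}'_1\oplus\mathcal{V}_2$ with $\mathcal{V}_1\simeq\mathcal{V}'_1\not\simeq\mathcal{V}_2$, the only indecomposable projective left $\mathcal{R}$-modules are $\mathcal{V}_1$ and $\mathcal{V}_2$, so projectivity gives $P\simeq\mathcal{V}_1^a\oplus\mathcal{V}_2^b$ for some integers $a,b\ge 0$. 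Because $P$ is a generator, every indecomposable projective summand of $_\mathcal{R}\mathcal{R}$ must occur in $P$, which forces $a\ge 1$ and $b\ge 1$.

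The ring isomorphism $\omega$ supplies the decisive numerical constraint $\dim_K{\rm End}(_\mathcal{R}P)=\dim_K\mathcal{R}=9$, and the whole problem reduces to computing this endomorphism dimension in terms of $a$ and $b$. Here I would use the standard identification ${\rm Hom}_\mathcal{R}(\mathcal{R}e,\mathcal{R}f)\simeq e\mathcal{R}f$ for idempotents $e,f$, after recording that $1=E+F_{11}+F_{22}$ is an orthogonal idempotent decomposition of the unit with $\mathcal{V}_2=\mathcal{R}E$, $\mathcal{V}_1=\mathcal{R}F_{11}$ and $\mathcal{V}'_1=\mathcal{R}F_{22}$. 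A direct inspection of the multiplication table of $\bf B$ shows that each of the four spaces $E\mathcal{R}E=\langle E\rangle$, $F_{11}\mathcal{R}F_{11}=\langle F_{11}\rangle$, $E\mathcal{R}F_{11}=\langle X_1\rangle$ and $F_{11}\mathcal{R}E=\langle Y_1\rangle$ is one-dimensional, so all four Hom-spaces among $\mathcal{V}_1$ and $\mathcal{V}_2$ have dimension $1$. Expanding ${\rm Hom}_\mathcal{R}(\mathcal{V}_1^a\oplus\mathcal{V}_2^b,\mathcal{V}_1^a\oplus\mathcal{V}_2^b)$ then yields $\dim_K{\rm End}(_\mathcal{R}P)=a^2+2ab+b^2=(a+b)^2$.

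Finally, setting $(a+b)^2=9$ gives $a+b=3$, and combined with $a,b\ge 1$ this leaves only $(a,b)\in\{(2,1),(1,2)\}$. Since $_\mathcal{R}\mathcal{R}\simeq\mathcal{V}_1^2\oplus\mathcal{V}_2$ and, by Lemma \ref{lemaduale}, $\mathcal{R}^*\simeq\mathcal{V}_1\oplus\mathcal{V}_2^2$ as left modules, these two cases say precisely that $P$ is isomorphic as a left $\mathcal{R}$-module either to $\mathcal{V}_1^2\oplus\mathcal{V}_2$ or to $\mathcal{V}_1\oplus\mathcal{V}_2^2$, which is the assertion.

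I expect the only non-routine step to be the clean count $\dim_K{\rm End}(_\mathcal{R}P)=(a+b)^2$. This identity holds exactly because every Hom-space among the indecomposable projectives happens to be one-dimensional, so the real content is verifying that each of the spaces $e\mathcal{R}f$ above is exactly one-dimensional and no larger; once this is checked the remaining bookkeeping is immediate.
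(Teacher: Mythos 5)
Your proof is correct, but it reaches the key numerical constraint by a genuinely different route than the paper. You invoke characterization (3) of invertibility (finitely generated projective generator plus the ring isomorphism $\omega:\mathcal{R}\to{\rm End}(_\mathcal{R}P)$) and compute $\dim_K{\rm End}(_\mathcal{R}P)=(a+b)^2$ from the Peirce components of the orthogonal idempotent decomposition $1=E+F_{11}+F_{22}$; your identifications $\mathcal{R}E=\mathcal{V}_2$, $\mathcal{R}F_{11}=\mathcal{V}_1$, and the four one-dimensional spaces $E\mathcal{R}E=\langle E\rangle$, $F_{11}\mathcal{R}F_{11}=\langle F_{11}\rangle$, $E\mathcal{R}F_{11}=\langle X_1\rangle$, $F_{11}\mathcal{R}E=\langle Y_1\rangle$ all check out against the multiplication table, so $(a+b)^2=9$ forces $a+b=3$. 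The paper instead uses characterization (1): it takes an inverse bimodule $Q$, writes $Q\simeq\mathcal{U}_1^c\oplus\mathcal{U}_2^d$ as a right module (with $c,d>0$ by the generator property on that side), and counts dimensions in $\mathcal{R}\simeq Q\otimes_\mathcal{R}P$ using Proposition \ref{propdimensiuni}, obtaining $(c+d)(a+b)=9$ and hence $a+b=c+d=3$. The trade-off: the paper's argument recycles Proposition \ref{propdimensiuni}, which it needs anyway for Proposition \ref{croset}, and yields the shape of the inverse $Q$ as a byproduct; yours stays entirely on the left-module side of $P$, avoids introducing $Q$ and the tensor-product dimension count, but requires the (easy, and not elsewhere recorded) verification that every Hom-space between the principal indecomposables $\mathcal{V}_1$ and $\mathcal{V}_2$ is one-dimensional. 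Both arguments share the same preliminary steps — projectivity gives $P\simeq\mathcal{V}_1^a\oplus\mathcal{V}_2^b$, the generator property and Krull--Schmidt give $a,b\geq 1$ — and both ultimately rest on the factorization $9=3\cdot 3$.
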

\begin{proof}
Let $Q$ be a bimodule such that $[Q]=[P]^{-1}$ in ${\rm Pic}(R)$.
Since $P$ is a finitely generated projective left module over the
finite-dimensional algebra $\mathcal{R}$, it is isomorphic to a
finite direct sum of principal indecomposable left
$\mathcal{R}$-modules, say $P\simeq \mathcal{V}_1^a\oplus
\mathcal{V}_2^b$ for some non-negative integers $a,b$. But $P$ is a
generator as a left $\mathcal{R}$-module, so $\mathcal{R}$ is a
direct summand in the left $\mathcal{R}$-module $P^m$ for some
positive integer $m$. Thus by the Krull-Schmidt Theorem, both $a$
and $b$ are positive. Similarly, $Q\simeq \mathcal{U}_1^c\oplus
\mathcal{U}_2^d$ as right $\mathcal{R}$-modules for some integers
$c,d>0$.  Now there are linear isomorphisms
$$\mathcal{R}\simeq Q\ot_{\mathcal{R}}P\simeq
(\mathcal{U}_1\ot_{\mathcal{R}}\mathcal{V}_1)^{ca}\oplus
(\mathcal{U}_1\ot_{\mathcal{R}}\mathcal{V}_2)^{cb}\oplus
(\mathcal{U}_2\ot_{\mathcal{R}}\mathcal{V}_1)^{da}\oplus
(\mathcal{U}_2\ot_{\mathcal{R}}\mathcal{V}_2)^{db}$$ Counting
dimensions and using Corollary \ref{propdimensiuni}, we see that
$(c+d)(a+b)=9$. As $a,b,c,d>0$, we must have $c+d=a+b=3$, so then
either $a=1$ and $b=2$, or $a=2$ and $b=1$.
\end{proof}

\begin{theorem}
Any invertible $\mathcal{R}$-bimodule is isomorphic either to
$_1\mathcal{R}_{\alpha}$ or to $_1{\mathcal{R}^*}_{\alpha}$ for some
$\alpha\in {\rm Aut}(\mathcal{R})$. As a consequence, ${\rm
Pic}(\mathcal{R})\simeq {\rm Out}(\mathcal{R})\times C_2$, where
$C_2$ is the cyclic group of order 2.
\end{theorem}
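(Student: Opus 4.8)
The plan is to combine the classification of invertible bimodules as \emph{left} modules (Lemma \ref{invertibleleftmodule}) with the rigidity statement of Proposition \ref{isoinvertible}. Given an invertible $\mathcal{R}$-bimodule $P$, Lemma \ref{invertibleleftmodule} tells me that as a left module $P$ is isomorphic to either $\mathcal{V}_1^2\oplus\mathcal{V}_2$ or $\mathcal{V}_1\oplus\mathcal{V}_2^2$. By Lemma \ref{lemaduale} these are precisely the left-module types of $\mathcal{R}={_1\mathcal{R}_1}$ and of $\mathcal{R}^*$; recall that $\mathcal{R}^*$ is itself invertible by Corollary \ref{dualinvertible}, since $\mathcal{R}$ is quasi-Frobenius. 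In the first case $P$ and $\mathcal{R}$ are invertible bimodules that agree as left modules, so Proposition \ref{isoinvertible} yields $\alpha\in{\rm Aut}(\mathcal{R})$ with $P\simeq{_1\mathcal{R}_\alpha}$; in the second case the same Proposition applied to $P$ and $\mathcal{R}^*$ gives $P\simeq{_1(\mathcal{R}^*)_\alpha}$. This establishes the first assertion.

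For the Picard group, I would let $N$ denote the image of the embedding ${\rm Out}(\mathcal{R})\hookrightarrow{\rm Pic}(\mathcal{R})$ from the introductory discussion, so that $N=\{[{_1\mathcal{R}_\alpha}]\mid\alpha\in{\rm Aut}(\mathcal{R})\}\simeq{\rm Out}(\mathcal{R})$. Using $_1(\mathcal{R}^*)_\alpha\simeq\mathcal{R}^*\ot_\mathcal{R}{_1\mathcal{R}_\alpha}$, the first part shows that every class of ${\rm Pic}(\mathcal{R})$ lies in $N\cup[\mathcal{R}^*]N$, so ${\rm Pic}(\mathcal{R})$ is generated by $N$ together with $[\mathcal{R}^*]$. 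Two facts about $[\mathcal{R}^*]$ are needed. First, $[\mathcal{R}^*]$ has order $2$: Proposition \ref{croset} furnishes a bimodule isomorphism $\varphi:\mathcal{R}^*\ot_\mathcal{R}\mathcal{R}^*\ra\mathcal{R}$, so $[\mathcal{R}^*]^2=1$, while $[\mathcal{R}^*]\neq1$ because $\mathcal{R}^*\simeq\mathcal{V}_1\oplus\mathcal{V}_2^2$ is not isomorphic to $\mathcal{R}\simeq\mathcal{V}_1^2\oplus\mathcal{V}_2$ even as a left module (the indecomposables $\mathcal{V}_1$ and $\mathcal{V}_2$ are non-isomorphic, so Krull--Schmidt separates the two). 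Second, the same left-module invariant shows $[\mathcal{R}^*]\notin N$, since every element of $N$ is represented by some ${_1\mathcal{R}_\alpha}$, which is isomorphic to $\mathcal{R}$ as a left module.

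I would then assemble the direct product internally. By Proposition \ref{dualulcomuta}, $[\mathcal{R}^*]$ commutes with every element of $N$, and it trivially commutes with itself; since $N$ and $[\mathcal{R}^*]$ generate ${\rm Pic}(\mathcal{R})$, this makes $[\mathcal{R}^*]$ central, whence both $N$ and $\langle[\mathcal{R}^*]\rangle\simeq C_2$ are normal subgroups. Because $[\mathcal{R}^*]\notin N$ we have $N\cap\langle[\mathcal{R}^*]\rangle=1$, and the coset description above gives $N\langle[\mathcal{R}^*]\rangle={\rm Pic}(\mathcal{R})$. Therefore ${\rm Pic}(\mathcal{R})$ is the internal direct product $N\times\langle[\mathcal{R}^*]\rangle\simeq{\rm Out}(\mathcal{R})\times C_2$.

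I expect the only genuinely delicate point to be the interplay of the two ingredients rather than any single step: Proposition \ref{isoinvertible} reduces bimodule classification to left-module classification, but one must then track the left-module type as a well-defined invariant of the bimodule isomorphism class in order to argue both that $[\mathcal{R}^*]\notin N$ and that exactly the two cosets occur. The centrality supplied by Proposition \ref{dualulcomuta} is what upgrades the resulting index-$2$ extension from a possibly nonsplit or nonabelian one into an honest direct product, so that Proposition is the true engine behind the $C_2$ factor.
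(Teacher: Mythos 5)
Your proposal is correct and takes essentially the same approach as the paper: you classify the invertible bimodules exactly as the paper does (Lemma \ref{invertibleleftmodule} plus Proposition \ref{isoinvertible}, identifying the two left-module types via Lemma \ref{lemaduale}), and you derive the group structure from the same facts, namely $[\mathcal{R}^*]^2=1$ from Proposition \ref{croset} and commutation from Proposition \ref{dualulcomuta}. The only difference is presentational: you assemble ${\rm Pic}(\mathcal{R})$ as an internal direct product of the image $N$ of ${\rm Out}(\mathcal{R})$ with $\langle[\mathcal{R}^*]\rangle$, whereas the paper writes down the explicit isomorphism sending $[{_1\mathcal{R}_\alpha}]\mapsto(\hat{\alpha},e)$ and $[{_1\mathcal{R}^*_\alpha}]\mapsto(\hat{\alpha},c)$.
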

\begin{proof}
We know that a bimodule of type ${_1\mathcal{R}_{\alpha}}$, with
$\alpha\in {\rm Aut}(\mathcal{R})$, is invertible; the inverse of
$[{_1\mathcal{R}_{\alpha}}]$ in ${\rm Pic}(\mathcal{R})$ is
$[{_1\mathcal{R}_{\alpha^{-1}}}]$. Moreover,
$[{_1\mathcal{R}_{\alpha}}]\cdot
[{_1\mathcal{R}_{\beta}}]=[{_1\mathcal{R}_{\alpha\beta}}]$, and
$[{_1\mathcal{R}_{\alpha}}]$ depends only on the class of $\alpha$
modulo ${\rm Inn}(\mathcal{R})$.

By Corollary \ref{dualinvertible}, $\mathcal{R}^*$ is an invertible
$\mathcal{R}$-bimodule, and then so is $\mathcal{R}^*\ot_\mathcal{R}
{_1\mathcal{R}_{\alpha}} \simeq {_1\mathcal{R}^*_{\alpha}}$. Since
$\mathcal{R}^*\ot_\mathcal{R}{_1\mathcal{R}_\alpha}\simeq
{_1\mathcal{R}_\alpha}\ot_\mathcal{R}\mathcal{R}^*$ by Proposition
\ref{dualulcomuta}, and $\mathcal{R}^*\ot
_\mathcal{R}\mathcal{R}^*\simeq \mathcal{R}$ by Corollary
\ref{orderdualNakayama}, we get that the subset $\mathcal{P}$ of
${\rm Pic}(\mathcal{R})$ consisting of all
${_1\mathcal{R}_{\alpha}}$ and ${_1\mathcal{R}^*_{\alpha}}$, with
$\alpha\in {\rm Aut}(\mathcal{R})$, is a subgroup isomorphic to
${\rm Out}(\mathcal{R})\times C_2$; an isomorphism between
$\mathcal{P}$ and ${\rm Out}(\mathcal{R})\times C_2$ takes
${_1\mathcal{R}_{\alpha}}$ to $(\hat{\alpha},e)$, and
${_1\mathcal{R}^*_{\alpha}}$ to $(\hat{\alpha},c)$, where $C_2=<c>$,
$e$ is the neutral element of $C_2$, and $\hat{\alpha}$ is the class
of $\alpha$ in ${\rm Out}(\mathcal{R})$.

Let $P$ be an invertible $\mathcal{R}$-bimodule. By Lemma
\ref{invertibleleftmodule} we see that as a left
$\mathcal{R}$-module, $P$ is isomorphic either to $ \mathcal{R}$ or
to $\mathcal{R}^*$. Now Proposition \ref{isoinvertible} shows that
either $P\simeq {_1\mathcal{R}_{\alpha}}$ or $P\simeq
{_1\mathcal{R}^*_{\alpha}}$ as $\mathcal{R}$-bimodules for some
$\alpha\in {\rm Aut}(\mathcal{R})$. We conclude that ${\rm
Pic}(\mathcal{R})=\mathcal{P}$, which ends the proof.
\end{proof}

\section{Automorphisms of $\mathcal{R}$}\label{sectionautomorfisme}

The aim of this section is to compute the automorphism group  and
the group of outer automorphisms of $\mathcal{R}$. We will use a
presentation of $\mathcal{R}$ given in \cite[Remark 4.1]{dnn3},
where it is explained that $\mathcal{R}$ is isomorphic to the Morita
ring
$\left[ \begin{array}{cc} K & X\\
Y & M_2(K) \end{array} \right]$ associated with the Morita context
connecting the rings $K$ and $M_2(K)$, by the bimodules $X=K^2$ and
 $Y=M_{2,1}(K)$, with all actions given by the usual matrix
multiplication, such that both Morita maps are zero. The
multiplication of this Morita ring is given by
$$\left[ \begin{array}{cc} \al & x\\
y & f \end{array} \right]\left[ \begin{array}{cc} \al' & x'\\
y' & f' \end{array} \right]=\left[ \begin{array}{cc} \al \al' &\al x'+xf'\\
\al' y+fy' & ff' \end{array} \right]$$ for any $\al,\al'\in K$,
$f,f'\in M_2(K)$, $x,x'\in X$ and $y,y'\in Y$. This Morita ring and
$M_3(K)$ coincide as $K$-vector spaces, but they have different
multiplications. An algebra isomorphism between $\mathcal{R}$ and $\left[ \begin{array}{cc} K & X\\
Y & M_2(K) \end{array} \right]$ takes $E, (X_i)_{1\leq i\leq 2},
(Y_i)_{1\leq i\leq 2}, (F_{ij})_{1\leq i,j\leq 2}$ to the elements
in the Morita ring corresponding to the "matrix units" in $K, X, Y,
M_2(K)$. Throughout this section, we will identify $\mathcal{R}$
with $\left[ \begin{array}{cc} K & X\\
Y & M_2(K) \end{array} \right]$.

The multiplicative group $K^{\times}\times GL_2(K)$ acts on the
additive group $K^2\times M_{2,1}(K)$ by
$$(\lambda,P)\cdot (x_1,y_1)=(\lambda x_1P^{-1},Py_1)$$
for any $\lambda \in K^{\times}, P\in GL_2(K),x_1\in K^2, y_1\in
M_{2,1}(K)$, so we can form a semidirect product $(K^2\times
M_{2,1}(K))\rtimes (K^{\times}\times GL_2(K))$.

For any $x_1\in K^2,y_1\in M_{2,1}(K), \lambda \in K^{\times},P\in
GL_2(K)$ define $\varphi_{x_1,y_1,\lambda,P}:\mathcal{R}\ra
\mathcal{R}$ by
$$\varphi_{x_1,y_1,\lambda,P}(\left[ \begin{array}{cc} \al & x\\
y & f \end{array} \right])= \left[ \begin{array}{cc} \al & \al x_1+\lambda xP^{-1}-x_1PfP^{-1}\\
\al y_1+Py-PfP^{-1}y_1 & PfP^{-1} \end{array} \right].$$

\begin{theorem}
$\varphi_{x_1,y_1,\lambda,P}$ is an algebra automorphism of
$\mathcal{R}$ for any $x_1\in K^2,y_1\in M_{2,1}(K), \lambda \in
K^{\times},P\in GL_2(K)$, and $\Phi:(K^2\times M_{2,1}(K))\rtimes
(K^{\times}\times GL_2(K))\ra {\rm Aut}(\mathcal{R})$,
$\Phi(x_1,y_1,\lambda,P)=\varphi_{x_1,y_1,\lambda,P}$ is an
isomorphism of groups. An automorphism $\varphi_{x_1,y_1,\lambda,P}$
of $\mathcal{R}$ is inner if and only if $\lambda=1$. As a
consequence, ${\rm Out}(\mathcal{R})\simeq K^{\times}$.
\end{theorem}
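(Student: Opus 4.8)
The plan is to build everything on the factorization
$$\varphi_{x_1,y_1,\lambda,P}=\tau_{x_1,y_1}\circ\sigma_{\lambda,P},\qquad \sigma_{\lambda,P}:=\varphi_{0,0,\lambda,P},\quad \tau_{x_1,y_1}:=\varphi_{x_1,y_1,1,I},$$
which is immediate from the defining formula. First I would check that each factor is an automorphism. For $\tau_{x_1,y_1}$ this is free: a direct computation of $u\,r\,u^{-1}$ shows that $\tau_{x_1,y_1}$ is exactly conjugation by the unit $\left[\begin{array}{cc}1&-x_1\\y_1&I\end{array}\right]$ (whose inverse in the Morita ring is $\left[\begin{array}{cc}1&x_1\\-y_1&I\end{array}\right]$, since the two Morita maps collapse the off-diagonal products), hence inner. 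For $\sigma_{\lambda,P}$ I would split further as $\sigma_{\lambda,P}=\sigma_{\lambda,I}\circ\sigma_{1,P}$: conjugation by $\left[\begin{array}{cc}1&0\\0&P\end{array}\right]$ realises $\sigma_{1,P}$ as inner, while $\sigma_{\lambda,I}$, which only rescales the $X=K^2$ block by $\lambda$, is multiplicative by a one-line check -- the key point being that because both Morita maps vanish, $\lambda$ enters the off-diagonal entry of a product only linearly and factors out. Composing, $\varphi_{x_1,y_1,\lambda,P}$ is an automorphism.

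For the homomorphism and injectivity claims I would argue inside the semidirect product. The relations $\sigma_{\lambda,P}\sigma_{\lambda',P'}=\sigma_{\lambda\lambda',PP'}$ and $\tau_{x_1,y_1}\tau_{x_1',y_1'}=\tau_{x_1+x_1',y_1+y_1'}$ are direct (the latter because the unipotent units multiply additively), and the crossed relation is clean: as $\sigma_{\lambda,P}$ is an automorphism, $\sigma_{\lambda,P}\,\tau_{x_1,y_1}\,\sigma_{\lambda,P}^{-1}$ is conjugation by $\sigma_{\lambda,P}\!\left(\left[\begin{array}{cc}1&-x_1\\y_1&I\end{array}\right]\right)=\left[\begin{array}{cc}1&-\lambda x_1P^{-1}\\Py_1&I\end{array}\right]$, i.e.\ equals $\tau_{\lambda x_1P^{-1},\,Py_1}$. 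This reproduces exactly the action $(\lambda,P)\cdot(x_1,y_1)=(\lambda x_1P^{-1},Py_1)$, so $\Phi$ is a group homomorphism. Injectivity follows by evaluating $\varphi_{x_1,y_1,\lambda,P}=\mathrm{id}$ block by block: equality on the $M_2(K)$-block forces $P$ to be central, hence scalar; equality on $X$ and $Y$ then forces $x_1=y_1=0$ and finally $\lambda=1$, $P=I$.

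The main obstacle is surjectivity, where I would exploit the structure of $\mathcal{R}$. Since both Morita maps are zero, $J:=X\oplus Y$ satisfies $J^2=0$ and is a two-sided ideal with $\mathcal{R}/J\simeq K\times M_2(K)$ semisimple, so $J$ is the Jacobson radical and is preserved by every $\theta\in{\rm Aut}(\mathcal{R})$. As $K$ and $M_2(K)$ are non-isomorphic simple factors, the induced automorphism of $\mathcal{R}/J$ fixes each factor, so $\theta(E)\equiv E\pmod J$; since $J$ is nilpotent, $\theta(E)$ and $E$ are conjugate by a unit congruent to $1$ modulo $J$, so after composing $\theta$ with a suitable $\tau_{x_1,y_1}$ we may assume $\theta(E)=E$, and hence $\theta(1-E)=1-E$. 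Then $\theta$ preserves all four Peirce components; on $(1-E)\mathcal{R}(1-E)\simeq M_2(K)$ it is, by Skolem--Noether, conjugation by some $P\in GL_2(K)$, so after composing with $\sigma_{1,P}^{-1}\in{\rm Im}\,\Phi$ we may assume $\theta$ fixes $M_2(K)$ pointwise. Now $\theta$ restricts to a right $M_2(K)$-module automorphism of $X\simeq K^2$ and a left $M_2(K)$-module automorphism of $Y\simeq K^2$; since the commutant of $M_2(K)$ on its natural module is $K$, these are scalars $\theta|_X=\lambda\cdot\mathrm{id}$ and $\theta|_Y=\mu\cdot\mathrm{id}$ with $\lambda,\mu\in K^*$, and one checks this $\theta$ coincides with $\varphi_{0,0,\lambda\mu,\mu I}$. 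Because each reduction was by an element of the subgroup ${\rm Im}\,\Phi$, the original $\theta$ lies in ${\rm Im}\,\Phi$.

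Finally, for the inner/outer statement I would package $\lambda$ as the composite homomorphism ${\rm Aut}(\mathcal{R})\xrightarrow{\Phi^{-1}}(K^2\times M_{2,1}(K))\rtimes(K^*\times GL_2(K))\to K^*$. A short computation shows that conjugation by $\left[\begin{array}{cc}a&0\\0&g\end{array}\right]$ equals $\sigma_{1,a^{-1}g}$, and together with the factorization of an arbitrary unit through such diagonal conjugations and the $\tau$'s, this shows every inner automorphism lies in the kernel $\{\lambda=1\}$. Conversely $\{\lambda=1\}=\{\varphi_{x_1,y_1,1,P}\}=\{\tau_{x_1,y_1}\circ\sigma_{1,P}\}$ consists of inner automorphisms. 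Hence ${\rm Inn}(\mathcal{R})$ is precisely this kernel, so $\varphi_{x_1,y_1,\lambda,P}$ is inner iff $\lambda=1$, and ${\rm Out}(\mathcal{R})={\rm Aut}(\mathcal{R})/{\rm Inn}(\mathcal{R})\simeq K^*$.
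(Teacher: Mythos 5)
Your proposal is correct, and it takes a genuinely different route from the paper's. The paper argues in the opposite direction: starting from an arbitrary automorphism, it uses the radical and the quotient $\mathcal{R}/J(\mathcal{R})\simeq K\times M_2(K)$ to force a general block shape with unknown maps $\mu,\omega$ and matrices $A,B$, then solves the functional equations imposed by multiplicativity; this yields a five-parameter family $\psi_{x_1,y_1,\lambda,\rho,P}$ and a surjection $\Psi$ from $(K^2\times M_{2,1}(K))\rtimes(K^*\times K^*\times GL_2(K))$, with the four-parameter statement recovered only after quotienting by the kernel $B_0=\{(\lambda,\lambda^{-1},\lambda I_2)\}$; inner automorphisms are then identified by computing conjugation by a general unit and matching parameters. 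You instead decompose $\varphi_{x_1,y_1,\lambda,P}=\tau_{x_1,y_1}\circ\sigma_{\lambda,I}\circ\sigma_{1,P}$ into pieces that are either visibly inner (conjugation by a unipotent unit, respectively by ${\rm diag}(1,P)$) or trivially multiplicative ($\sigma_{\lambda,I}$, where the vanishing of the Morita maps is exactly what makes the scaling admissible), obtain the semidirect-product relations structurally from $\theta c_u\theta^{-1}=c_{\theta(u)}$, and prove surjectivity by a normalization argument: conjugate $\theta(E)$ back to $E$ through the square-zero radical, apply Skolem--Noether on the corner $(1-E)\mathcal{R}(1-E)\simeq M_2(K)$, and use the commutant ${\rm End}_{M_2(K)}(K^2)=K$ to identify what remains as $\varphi_{0,0,\lambda\mu,\mu I_2}$. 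Your treatment of ${\rm Inn}(\mathcal{R})$, via the projection homomorphism onto $K^*$ together with the factorization of an arbitrary unit as a unipotent unit times a diagonal one, replaces the paper's parameter-matching computation; it silently uses (and you should state) the one-line fact that a unit of $\mathcal{R}$ has invertible diagonal blocks, which follows by passing to $\mathcal{R}/J(\mathcal{R})$, and the standard fact that idempotents congruent modulo a nilpotent ideal are conjugate by a unit congruent to $1$. What the paper's computation buys is an explicit classification of all endomorphisms of the given block shape, with every formula displayed and no outside input; what yours buys is brevity and conceptual transparency --- no redundant parameter $\rho$, no quotient-group bookkeeping, with idempotent lifting, Skolem--Noether and the double centralizer doing the heavy lifting.
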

\begin{proof}
Let $\varphi\in {\rm Aut}(\mathcal{R})$. Since the Jacobson radical
of $\mathcal{R}$ is $J(\mathcal{R})=\left[ \begin{array}{cc} 0 & X\\
Y & 0 \end{array} \right]$, $\varphi$ induces an automorphism
$\tilde{\varphi}$ of the algebra $\mathcal{R}/J(\mathcal{R})\simeq
K\times M_2(K)$, thus $\tilde{\varphi}$ acts as identity on the
first position, and as an inner automorphism associated to some
$P\in GL_2(K)$ on the second one. Lifting to $\mathcal{R}$, we see
that
$\varphi(\left[ \begin{array}{cc} 1 & 0\\
0 & 0 \end{array} \right])= \left[ \begin{array}{cc} 1 & x_1\\
y_1 & 0\end{array} \right]$ for some $x_1\in K^2$ and $y_1\in
M_{2,1}(K)$, and $\varphi(\left[ \begin{array}{cc} 0 & 0\\
0 & f \end{array} \right])= \left[ \begin{array}{cc} 0 & \mu (f)\\
\omega (f) & PfP^{-1}\end{array} \right]$ for some linear maps
$\mu:M_2(K)\ra X$ and $\omega:M_2(K)\ra Y$.

On the other hand, since $\varphi (J(\mathcal{R}))\subset
J(\mathcal{R})$, $\varphi(\left[ \begin{array}{cc} 0 & x\\
0 & 0 \end{array} \right])\in \left[ \begin{array}{cc} 0 & X\\
Y & 0 \end{array} \right]$, so then
$$\varphi(\left[ \begin{array}{cc} 0 & x\\
0 & 0 \end{array} \right])=\varphi(\left[ \begin{array}{cc} 1 & 0\\
0 & 0 \end{array} \right]\left[ \begin{array}{cc} 0 & x\\
0 & 0 \end{array} \right])\in \left[ \begin{array}{cc} 1 & x_1\\
y_1 & 0\end{array} \right]\left[ \begin{array}{cc} 0 & X\\
Y & 0 \end{array} \right]\subset \left[ \begin{array}{cc} 0 & X\\
0 & 0 \end{array} \right].$$ This shows that $\varphi(\left[ \begin{array}{cc} 0 & x\\
0 & 0 \end{array} \right])=\left[ \begin{array}{cc} 0 & \theta(x)\\
0 & 0 \end{array} \right]$ for a linear map $\theta:X\ra X$; thus
$\theta (x)=xA$ for any $x\in X$, where $A\in M_2(K)$.

Similarly we see that $\varphi(\left[ \begin{array}{cc} 0 & 0\\
y & 0 \end{array} \right])=\left[ \begin{array}{cc} 0 & 0\\
By & 0 \end{array} \right]$ for any $y\in Y$, where $B\in M_2(K)$.
Thus we obtain that $\varphi$ must be of the form
\begin{equation}\label{eqformulaphi}
\varphi(\left[ \begin{array}{cc} \al & x\\
y & f \end{array} \right])= \left[ \begin{array}{cc} \al & \al x_1+xA+\mu (f)\\
\al y_1+By+\omega(f) & PfP^{-1} \end{array} \right].
\end{equation}

By equating the corresponding entries, we see that the matrices $\varphi(\left[ \begin{array}{cc} \al & x\\
y & f \end{array} \right]\left[ \begin{array}{cc} \al' & x'\\
y' & f' \end{array} \right])$ and $\varphi(\left[ \begin{array}{cc} \al \al' &\al x'+xf'\\
y\al'+fy' & ff' \end{array} \right])$ are equal if and only if the
equations

\begin{equation}\label{eq1}
\al \mu(f')+\al
x_1Pf'P^{-1}+xAPf'P^{-1}+\mu(f)Pf'P^{-1}=xf'A+\mu(ff')
\end{equation}
and

\begin{equation} \label{eq2}
\al' \omega(f)+\al'PfP^{-1}y_1+PfP^{-1}By'+PfP^{-1}\omega (f')=Bfy'
+\omega (ff')
\end{equation}
are satisfied for any $\al,\al'\in K$, $x,x'\in K^2$, $y,y'\in
M_{2,1}(K)$, $f,f'\in M_2(K)$. If in equation (\ref{eq1}) we take
$f=0$, we get $\al (\mu(f')+x_1Pf'P^{-1})+xAPf'P^{-1}-xf'A=0$. As
this holds for any $\al\in K$, we must have
\begin{equation}\label{eq3}
\mu(f')+x_1Pf'P^{-1}=0
\end{equation}
and $x(APf'P^{-1}-f'A)=0$. As $x$ runs through $K^2$, we get
$APf'P^{-1}-f'A=0$, showing that $APf'=f'AP$ for any $f'$, so $AP\in
KI_2$, or equivalently,
\begin{equation}\label{eq4}
A\in KP^{-1}.
\end{equation}
On the other hand, it is clear that if equations (\ref{eq3}) and
(\ref{eq4}) holds, then (\ref{eq1}) is satisfied.

In a similar way, we see that (\ref{eq2}) is true if and only if
\begin{equation}\label{eq5}
\omega(f)=-PfP^{-1}y_1
\end{equation}
and
\begin{equation}\label{eq6}
B\in KP.
\end{equation}
These show that a map $\varphi$ of the form given in
(\ref{eqformulaphi}) is a ring morphism if and only if
$$\mu (f)=-x_1PfP^{-1},\; \omega (f)=-PfP^{-1}y_1,\; A\in KP^{-1},\;
B\in KP.$$ Thus take $A=\lambda P^{-1}$ and $B=\rho P$, with
$\lambda,\rho\in K$; in fact, in order for $\varphi$ to be injective
one needs $\lambda,\rho\in K^{\times}$. For any $x_1\in K^2, y_1\in
M_{2,1}(K),\lambda,\rho\in K^{\times},P\in GL_2(K)$, denote by
$\psi_{x_1,y_1, \lambda,\rho,P}:\mathcal{R}\ra \mathcal{R}$ the map
defined by
$$\psi_{x_1,y_1,
\lambda,\rho,P}(\left[ \begin{array}{cc} \al & x\\
y & f \end{array} \right])= \left[ \begin{array}{cc} \al & \al x_1+\lambda xP^{-1}-x_1PfP^{-1}\\
\al y_1+\rho Py-PfP^{-1}y_1 & PfP^{-1} \end{array} \right].$$ The
considerations above show that $\psi_{x_1,y_1, \lambda,\rho,P}$ is
an algebra endomorphism of $\mathcal{R}$. As it is clearly
injective, it is in fact an automorphism of $\mathcal{R}$. We showed
that any automorphism of $\mathcal{R}$ is one such $\psi_{x_1,y_1,
\lambda,\rho,P}$.

A straightforward computation shows that
\begin{equation}\label{eqsemidirect}
\psi_{x'_1,y'_1, \lambda',\rho',P'}\psi_{x_1,y_1,
\lambda,\rho,P}=\psi_{x'_1+\lambda'x_1(P')^{-1},y'_1+\rho'P'y_1,
\lambda'\lambda,\rho'\rho,P'P}
\end{equation}

Consider the additive group $A=K^2\times M_{2,1}(K)$ and the
multiplicative group $B=K^{\times}\times K^{\times}\times GL_2(K)$.
Then $B$ acts on $A$ by $(\lambda,\rho,P)\cdot (x_1,y_1)=(\lambda
x_1P^{-1},\rho Py_1)$, and (\ref{eqsemidirect}) shows that
$$\Psi:A\rtimes B\ra {\rm Aut}(\mathcal{R}),\,\,\, \Psi(x_1,y_1,
\lambda,\rho,P)=\psi_{x_1,y_1, \lambda,\rho,P}$$ is a group
morphism. We have also seen that $\Psi$ is surjective. Now
$\psi_{x_1,y_1, \lambda,\rho,P}$ is the identity morphism if and
only if
$$PfP^{-1}=f,\; \al x_1+\lambda xP^{-1}-x_1PfP^{-1}=x,\; \al y_1+\rho
Py-PfP^{-1}y_1=y$$ for any $\al\in K, x\in K^2,y\in M_{2,1}(K),f\in
M_2(K)$. If we take $\alpha=1, x=0, f=0$ in the second relation, we
get $x_1=0$. Hence $\lambda xP^{-1}=x$ for any $x$, so $P=\lambda
I_2$. Similarly, the third relation shows that $y_1=0$ and
$P=\rho^{-1}I_2$. Therefore ${\rm Ker}(\Psi)=0\times B_0$, where
$B_0=\{ (\lambda,\lambda^{-1},\lambda I_2)|\lambda \in
K^{\times}\}$. As $B_0$ acts trivially on $A$, the action of $B$
induces an action of the factor group $\frac{B}{B_0}$ on $A$, and
then ${\rm Aut}(\mathcal{R})\simeq \frac{A\rtimes B}{0\rtimes
B_0}\simeq A\rtimes \frac{B}{B_0}$. Denoting by $\overline{b}$ the
class of some $b\in B$ modulo $B_0$, we see that
$$\overline{(\lambda,\rho,P)}=\overline{(\rho^{-1},\rho,\rho
I_2)}\overline{(\lambda \rho,1,\rho^{-1}P)}=\overline{(\lambda
\rho^{-1},1,\rho^{-1}P)},$$ so there is a group isomorphism
$\Gamma:K^{\times}\times GL_2(K)\ra \frac{B}{B_0}$ taking
$(\lambda,P)$ to $\overline{(\lambda,1,P)}$. $\Gamma$ induces an
action of $K^{\times}\times GL_2(K)$ on $A$, given by
$$(\lambda,P)\cdot (x_1,y_1)=(\lambda,1,P)\cdot (x_1,y_1)=(\lambda
x_1P^{-1},Py_1).$$ We obtain a composition of group isomorphisms
$$\Phi:A\rtimes (K^{\times}\times GL_2(K))\longrightarrow A\rtimes
\frac{B}{B_0}\longrightarrow {\rm Aut}(\mathcal{R})$$ given by
$\Phi(x_1,y_1,\lambda,P)=\psi_{x_1,y_1, \lambda,1,P}$. Now we denote
$\psi_{x_1,y_1, \lambda,1,P}=\varphi_{x_1,y_1, \lambda,P}$ and the
first part of the statement is proved.

A direct computation shows that an element $\left[ \begin{array}{cc} \beta & z\\
g & m \end{array} \right]$ of $\mathcal{R}$ is invertible if and
only if $\beta\neq 0$ and $m\in GL_2(K)$, and in this case its
inverse is $\left[ \begin{array}{cc} \beta^{-1} & -\beta^{-1}zm^{-1}\\
-\beta^{-1}m^{-1}g & m^{-1} \end{array} \right]$, and the associated
inner automorphism of $\mathcal{R}$ takes $\left[ \begin{array}{cc} \al & x\\
y & f \end{array} \right]$ to $$\left[ \begin{array}{cc} \al & \al\beta^{-1}z+\beta^{-1}xm-\beta^{-1}zm^{-1}fm\\
-\al m^{-1}g+\beta m^{-1}y+m^{-1}fg & m^{-1}fm \end{array}
\right],$$ so it is just
$\psi_{\beta^{-1}z,-m^{-1}g,\beta^{-1},\beta,m^{-1}}$. Hence
$\varphi_{x_1,y_1, \lambda,P}=\psi_{x_1,y_1, \lambda,1,P}$ is inner
if and only if $\psi_{x_1,y_1,
\lambda,1,P}=\psi_{\beta^{-1}z,-m^{-1}g,\beta^{-1},\beta,m^{-1}}$
for some $\beta\in K^{\times}, z\in K^2,g\in M_{2,1}(K),m\in
GL_2(K)$, and taking into account the description of the kernel of
$\Psi$, this equality is equivalent to
$(x_1,y_1,\lambda,1,P)=(\beta^{-1}z,-m^{-1}g,\beta^{-1},\beta,m^{-1})(0,0,\rho,\rho^{-1},\rho
I_2)=(\beta^{-1}z,-m^{-1}g,\beta^{-1}\rho,\beta\rho^{-1},\rho
m^{-1})$ for some $\rho\in K^{\times}$. Equating the corresponding
positions, we get $1=\beta\rho^{-1}$, so $\rho=\beta$, and then
$\lambda=\beta^{-1}\rho=1$, $z=\beta x_1=\rho x_1$, $m=\rho P^{-1}$
and $g=-my_1=-\rho P^{-1}y_1$. We conclude that $\varphi_{x_1,y_1,
\lambda,P}$ is inner if and only if $\lambda=1$, and in this case,
by making the choice $\rho =1$, $\varphi_{x_1,y_1, 1,P}$ is the
inner automorphism associated
with the invertible element $\left[ \begin{array}{cc} 1 &  x_1\\
-P^{-1}y_1 & P^{-1} \end{array} \right]$.

We got that ${\rm Inn}(\mathcal{R})=\Phi (A\rtimes (1\times
GL_2(K))$, so then
$${\rm Out}(\mathcal{R})=\frac{{\rm Aut}(\mathcal{R})}{{\rm
Inn}(\mathcal{R})}\simeq \frac{A\rtimes (K^{\times}\times
GL_2(K))}{A\rtimes (1\times GL_2(K))}\simeq K^{\times}.$$ Finally,
we note that the outer automorphism corresponding to $\lambda \in
K^{\times}$ through the isomorphism ${\rm Out}(\mathcal{R})\simeq
K^{\times}$ is (the class of) $\varphi_{0,0, \lambda,I_2}$.
\end{proof}

\section{Semitrivial extensions }
\label{semitrivialextensions}

 Let $R$ be a
finite-dimensional $K$-algebra, and consider the $R$-bimodule $R^*$
with actions denoted by $\rhu$ and $\lhu$. Let $\psi:R^*\ot_RR^*\ra
R$ be a morphism of $R$-bimodules, and denote $\psi(r^*\ot _Rs^*)$
by $[r^*,s^*]$ for any $r^*,s^*\in R^*$. We say that $\psi$ is
associative if $[r^*,s^*]\rhu t^*=r^*\lhu [s^*,t^*]$ for any
$r^*,s^*,t^*\in R^*$; in other words, we have a Morita context
$(R,R,R^*,R^*,\psi,\psi)$ connecting the rings $R$ and $R$, with
both bimodules being $R^*$, and both Morita maps equal to $\psi$. It
follows from Morita theory that if $\psi$ is associative and
surjective, then it is an isomorphism of $R$-bimodules.

If $\psi:R^*\ot_RR^*\ra R$ is an associative morphism of
$R$-bimodules, we consider the semitrivial extension $R\rtimes_\psi
R^*$, which is the cartesian product $R\times R^*$ with the usual
addition, and multiplication defined by
$$(r,r^*)(s,s^*)=(rs+[r^*,s^*],(r\rhu s^*)+(r^*\lhu s))$$ for any $r,s\in
R, r^*,s^*\in R^*$. Then $R\rtimes_\psi R^*$ is an algebra with
identity element $(1,0)$; this construction was introduced in
\cite{p}. Moreover, it is a $C_2$-graded algebra, where $C_2=<c>$ is
a cyclic group of order 2, with homogeneous components
$(R\rtimes_\psi R^*)_e=R\times 0$ and $(R\rtimes_\psi R^*)_c=0\times
R^*$; here $e$ denotes the neutral element of $C_2$. It is a
strongly graded algebra if and only if $\psi$ is surjective, thus an
isomorphism.

\begin{proposition}
Let $R$ be a finite-dimensional algebra and let $\psi:R^*\ot_RR^*\ra
R$ be an associative morphism of $R$-bimodules. Then $R\rtimes_\psi
R^*$ is a symmetric algebra.
\end{proposition}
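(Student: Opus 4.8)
The plan is to produce on the algebra $A:=R\rtimes_\psi R^*$ a nondegenerate, symmetric, associative $K$-bilinear form, since for a finite dimensional algebra the existence of such a form is equivalent to being symmetric: it yields an isomorphism $A\simeq A^*$ of $A$-bimodules (see \cite[Section 16E]{lam}). Modelled on the trivial extension case, the candidate is
\[
B((r,r^*),(s,s^*))=r^*(s)+s^*(r),
\]
pairing the $R^*$-component of each argument against the $R$-component of the other.

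First I would verify the two easy properties. Symmetry is clear, since the right-hand side is unchanged when the two arguments are swapped. For nondegeneracy I would use finite dimensionality: if $(r,r^*)$ pairs to $0$ with every element, then testing against $(s,0)$ forces $r^*=0$, while testing against $(0,s^*)$ forces $r=0$, using $R\simeq R^{**}$.

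The crux is associativity, $B((r,r^*)(s,s^*),(t,t^*))=B((r,r^*),(s,s^*)(t,t^*))$, and this is the only step where the hypothesis on $\psi$ is used. I would expand both sides with the multiplication rule of $A$ and the action formulas $(r\rhu s^*)(t)=s^*(tr)$ and $(s^*\lhu r)(t)=s^*(rt)$. After cancelling the common terms $s^*(tr)+r^*(st)+t^*(rs)$ that appear on both sides, the identity collapses to the single equation
\[
t^*([r^*,s^*])=r^*([s^*,t^*]).
\]
This is precisely the associativity relation $[r^*,s^*]\rhu t^*=r^*\lhu[s^*,t^*]$ of $\psi$, evaluated at $1\in R$: by the definitions of $\rhu$ and $\lhu$, the left-hand side at $1$ equals $t^*([r^*,s^*])$ and the right-hand side at $1$ equals $r^*([s^*,t^*])$.

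Finally I would assemble the pieces. The map $a\mapsto B(a,-)$ is a linear bijection $A\to A^*$ by nondegeneracy; it is a right module map by associativity of $B$, and symmetry upgrades it to a two-sided map, since $B(xa,c)=B(c,xa)=B(cx,a)=B(a,cx)$ takes care of the left action. Thus $A\simeq A^*$ as bimodules, so $A$ is symmetric. I expect the only genuine work to be the associativity expansion; the main thing to watch is careful bookkeeping of the four terms on each side so that the reduction to $t^*([r^*,s^*])=r^*([s^*,t^*])$ is transparent.
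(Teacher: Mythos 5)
Your proposal is correct and is essentially the paper's own argument: your form $B$ is exactly the paper's map $\Phi:A\ra A^*$, $\Phi(r,r^*)(s,s^*)=r^*(s)+s^*(r)$, read as a pairing, and your key identity $t^*([r^*,s^*])=r^*([s^*,t^*])$ is precisely the paper's formula (2), obtained in the same way by evaluating the associativity relation $[r^*,s^*]\rhu t^*=r^*\lhu[s^*,t^*]$ at $1$. The only cosmetic difference is that you verify symmetry and associativity of the form and then transport this to the bimodule statement, whereas the paper checks directly that $\Phi$ is a morphism of left and of right $A$-modules; both routes rest on the same cancellation of the terms $s^*(tr)+r^*(st)+t^*(rs)$.
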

\begin{proof}
If we evaluate both sides of $[r^*,s^*]\rhu t^*=r^*\lhu [s^*,t^*]$
at 1, we get
\begin{equation} \label{formula2}
t^*([r^*,s^*])=r^*([s^*,t^*]) \mbox{ for any }r^*,s^*,t^*\in R^*.
\end{equation}

Denote $A=R\rtimes_\psi R^*$ and define
$$\Phi:A\ra A^*, \; (\Phi(r,r^*))(s,s^*)=r^*(s)+s^*(r) \;\;\mbox{for any
}r,s\in R, r^*,s^*\in R^*.$$ If $\Phi(r,r^*)=0$, then
$r^*(s)=(\Phi(r,r^*))(s,0)=0$ for any $s\in R$, so $r^*=0$, and
$s^*(r)=(\Phi(r,r^*))(0,s^*)=0$ for any $s^*\in R^*$, so $r=0$. This
shows that $\Phi$ is injective, thus a linear isomorphism. Moreover,
if $(x,x^*), (r,r^*), (s,s^*)\in A$, then

\bea (\Phi ((x,x^*)(r,r^*)))(s,s^*)&=& (\Phi(xr+[x^*,r^*],(x\rhu
r^*)+(x^*\lhu r)))(s,s^*)\\
&=&(x\rhu r^*)(s)+(x^*\lhu r)(s)+s^*(xr+[x^*,r^*]) \\
&=&r^*(sx)+x^*(rs)+s^*(xr)+s^*([x^*,r^*])\\
&=&r^*(sx)+x^*(rs)+s^*(xr)+r^*([s^*,x^*]) \;\;\;\; (\mbox{by } (\ref{formula2}))\\
&=&(s\rhu x^*+s^*\lhu x)(r)+r^*(sx+ [s^*,x^*])\\
&=&(\Phi(r,r^*))(sx+ [s^*,x^*],s\rhu x^*+s^*\lhu x)\\
&=&(\Phi(r,r^*))((s,s^*)(x,x^*))\\
&=&((x,x^*)\rhu \Phi(r,r^*))(s,s^*),\eea showing that $\Phi$ is a
morphism of left $A$-modules, and

\bea (\Phi (x,x^*)\lhu(r,r^*))(s,s^*)&=&(\Phi(x,x^*))((r,r^*)(s,s^*))\\
&=&(\Phi (x,x^*))(rs+[r^*,s^*],(r\rhu s^*)+(r^*\lhu s))\\
&=&x^*(rs)+x^*([r^*,s^*])+s^*(xr)+r^*(sx)\\
&=&x^*(rs)+s^*([x^*,r^*])+s^*(xr)+r^*(sx) \;\;\;\; (\mbox{by } (\ref{formula2}))\\
&=&(\Phi ((x,x^*)(r,r^*)))(s,s^*)\;\;\;\; (\mbox{by the computations
above}),\eea so $\Phi$ is also a morphism of right $A$-modules. We
conclude that $\Phi$ is an isomorphism of $A$-bimodules.
\end{proof}

We first mention two particular cases of interest.

The first one is for an arbitrary finite-dimensional algebra $R$ and
the zero morphism $\psi:R^*\ot_RR^*\ra R$. The associated
semitrivial extension, called in fact the trivial extension, is
$R\times R^*$ with the multiplication given by
$(r,r^*)(s,s^*)=(rs,(r\rhu s^*)+(r^*\lhu s))$ for any $r,s\in R,
r^*,s^*\in R^*$. This is just the example of Tachikawa of a
symmetric algebra constructed from $R$, see \cite[Example
16.60]{lam}.

The second one is for a symmetric finite-dimensional algebra $R$. As
$R^*\simeq R$ as bimodules, a semitrivial extension $R\rtimes_\psi
R^*$ is isomorphic to $R\times R$ with multiplication
$(r,a)(s,b)=(rs+\gamma(a\ot_Rb),rb+as)$, where $\gamma:R\ot_RR\ra R$
is a morphism of $R$-bimodules. As such a $\gamma$ is of the form
$\gamma (a\ot_Rb)=zab$ for any $a,b\in R$, where $z$ is an element
in the centre of $R$, any semitrivial extension of this kind is
isomorphic to the algebra $A_z=R\times R$ for some $z\in Cen(R)$,
whose multiplication is given by $(r,a)(s,b)=(rs+zab,rb+as)$.\\

\section{Order 2 elements in Picard groups and associative
isomorphisms}\label{sectionassociative}

In order to construct semitrivial extensions that are strongly
$C_2$-graded, we consider finite-dimensional algebras $R$ such that
$R^*\ot_RR^*\simeq R$, and we are interested in the associativity of
isomorphisms $R^*\ot_RR^*\ra R$. We have seen in Proposition
\ref{dualinvertible} that such an $R$ is necessarily
quasi-Frobenius, it is clear that $[R^*]$ has order at most 2 in
${\rm Pic}(R)$, and we addressed Question 2 in the Introduction,
asking whether any such isomorphism is associative.

The following shows that the answer to the question depends only on
the algebra, and not on a particular choice of the isomorphism.

\begin{proposition}\label{anyisoassoc}
If $R$ is a finite-dimensional algebra such that $R^*\ot_RR^*\simeq
R$ as bimodules and there exists an associative isomorphism
$R^*\ot_RR^*\ra R$, then any other such isomorphism  is associative.
\end{proposition}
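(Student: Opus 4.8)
The plan is to exploit the fact that any two isomorphisms $R^*\ot_RR^*\ra R$ differ by a bimodule automorphism of the regular bimodule $R$. First I would record that the endomorphisms of $R$ as an $R$-bimodule are exactly the maps $r\mapsto zr$ with $z\in Cen(R)$: if $\theta$ is such an endomorphism then $\theta(r)=\theta(1\cdot r)=\theta(1)r$ and $\theta(r)=\theta(r\cdot 1)=r\theta(1)$, so $z:=\theta(1)$ commutes with every $r$ and $\theta=z\cdot(-)$; moreover $\theta$ is bijective precisely when $z$ is invertible. Consequently, if $\psi$ denotes the given associative isomorphism and $\psi'$ is any other isomorphism $R^*\ot_RR^*\ra R$, then $\psi'\psi^{-1}$ is an automorphism of the bimodule $R$, hence $\psi'=z\,\psi$ for some invertible $z\in Cen(R)$; that is, $\psi'(w)=z\,\psi(w)$ for all $w\in R^*\ot_RR^*$.

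Next I would deduce associativity of $\psi'$ from that of $\psi$ using three elementary facts about the $R$-bimodule $R^*$: that $\rhu$ is a left action and $\lhu$ a right action of $R$; that the two actions commute, $(a\rhu v^*)\lhu b=a\rhu(v^*\lhu b)$; and the identity $z\rhu v^*=v^*\lhu z$ for every central $z$, which holds because both sides send $v\in R$ to $v^*(vz)=v^*(zv)$. Writing $[r^*,s^*]=\psi(r^*\ot_Rs^*)$, associativity of $\psi'$ is the assertion $(z[r^*,s^*])\rhu t^*=r^*\lhu(z[s^*,t^*])$. For the left-hand side, since $\rhu$ is a left action, $(z[r^*,s^*])\rhu t^*=z\rhu([r^*,s^*]\rhu t^*)=z\rhu(r^*\lhu[s^*,t^*])$, the last equality being associativity of $\psi$. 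For the right-hand side, since $\lhu$ is a right action, $r^*\lhu(z[s^*,t^*])=(r^*\lhu z)\lhu[s^*,t^*]$. Finally $z\rhu(r^*\lhu[s^*,t^*])=(z\rhu r^*)\lhu[s^*,t^*]=(r^*\lhu z)\lhu[s^*,t^*]$, using first that the actions commute and then the central identity. The two sides thus coincide, so $\psi'$ is associative.

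The computation is short, and the step I expect to be the crux is the identity $z\rhu v^*=v^*\lhu z$ for central $z$: this is precisely what allows a central scalar to be transported from the $\rhu$-side to the $\lhu$-side of the associativity equation, and without it the argument would not close. Everything else is standard bimodule bookkeeping, and I note that invertibility of $z$ is used only to know that $\psi'$ is an isomorphism in the first place, not in the verification of associativity itself.
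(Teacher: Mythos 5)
Your proof is correct, and it takes a genuinely more elementary route than the paper's. Both arguments begin with the same two observations: that bimodule automorphisms of $R$ are multiplication by central units (so $\psi'=z\psi$ for some invertible $z\in Cen(R)$), and that $z\rhu v^*=v^*\lhu z$ for central $z$. But from there the paper detours through Morita theory: it invokes the existence of a strict Morita context $(R,R,R^*,R^*,\phi,\phi')$ to deduce that \emph{every} isomorphism $R^*\ot_RR^*\ra R$ satisfies the ``twisted'' identity $\psi(r^*\ot_Rs^*)\rhu t^*=c\rhu\bigl(r^*\lhu\psi(s^*\ot_Rt^*)\bigr)$ for one fixed central unit $c$, and only then uses the hypothesized associative isomorphism to force $c=1$. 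You instead verify directly that scaling any associative morphism by a central element preserves associativity --- the three-line computation with the left-action axiom, the right-action axiom, the commutation of the two actions, and the central identity --- which, combined with $\psi'=z\psi$, finishes the proof immediately. What your approach buys is brevity and self-containedness: no appeal to Morita theory is needed, and the hypothesis that $z$ is invertible plays no role in the verification. What the paper's approach buys is a stronger intermediate fact: even \emph{without} assuming that an associative isomorphism exists, every isomorphism $R^*\ot_RR^*\ra R$ is associative up to a single fixed central unit, which is structural information relevant to the open question (Question 2) of whether associativity holds automatically for all quasi-Frobenius algebras.
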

\begin{proof}
Let $\psi,\psi':R^*\ot_RR^*\ra R$  be isomorphisms of bimodules, and
assume that $\psi$ is associative. Then $\psi'\psi^{-1}$ is an
automorphism of the bimodule $R$, so it is the multiplication by a
central invertible element $c$. Therefore $\psi'(y)=c\psi(y)$ for
any $y\in R^*\ot_RR^*$. We note that $c\rhu r^*=r^*\lhu c$ for any
$r^*\in R^*$, since $(c\rhu r^*)(a)=r^*(ac)=r^*(ca)=(r^*\lhu c)(a)$
for any $a\in R$.

Now for any $r^*,s^*,t^*\in R^*$ \bea \psi'(r^*\ot_Rs^*)\rhu
t^*&=&(c\psi(r^*\ot_Rs^*))\rhu t^*\\
&=&c\rhu (\psi(r^*\ot_Rs^*)\rhu t^*)\\
&=&c\rhu (r^*\lhu \psi(s^*\ot_Rt^*)) \\
&=&(c\rhu r^*)\lhu \psi(s^*\ot_Rt^*)\\
&=&( r^*\lhu c)\lhu \psi(s^*\ot_Rt^*)\\
&=&r^*\lhu (c\psi(s^*\ot_Rt^*))\\
&=&r^*\lhu  \psi'(s^*\ot_Rt^*), \eea showing that $\psi'$ is
associative as well.
\end{proof}

The following answers in the positive our question in the Frobenius
case.

\begin{lemma} \label{Frobeniusasociativ}
Let $R$ be a Frobenius algebra such that $R^*\ot_RR^*\simeq R$ as
bimodules. Then any isomorphism $\psi:R^*\ot_RR^*\ra R$ is
associative.
\end{lemma}
\begin{proof}
Let $\lambda\in R^*$ be a Frobenius form and let $\nu$ be the
Nakayama automorphism associated with $\lambda$. We have seen in
Proposition \ref{dualalpha} that $\theta:{_1R_\nu}\ra R^*$, $\theta
(r)=r\rhu \lambda$, is a bimodule isomorphism. Then
$R^*\ot_RR^*\simeq {_1R_\nu}\ot_R{_1R_\nu}\simeq {_1R_{\nu^2}}$, so
 ${_1R_{\nu^2}}\simeq R$, which shows that $\nu^2$ is inner; let
$\nu^2 (r)=u^{-1}ru$ for any $r\in R$, where $u$ is an invertible
element of $R$. Now for any $a\in R$

\bea \lambda (au)&=&\lambda (u\nu(a)) \;\;\;\;\;\;\;\;\;\;\;\;\;\;
(\mbox{since }\nu\mbox{
is the Nakayama automorphism})\\
&=&\lambda(u\nu^2(\nu^{-1}(a)))\\
&=&\lambda(uu^{-1}\nu^{-1}(a)u) \;\;\;(\mbox{since }\nu^2\mbox{ is
inner})\\
&=&\lambda(\nu^{-1}(a)u)\\
&=&\lambda(ua) \;\;\;\;\;\;\;\;\;\;\;\;\;\;\;\;\;\;\;\; (\mbox{since
}\nu\mbox{ is the Nakayama automorphism}), \eea showing that
$\lambda (au)=\lambda(ua)$, or equivalently, $u\rhu
\lambda=\lambda\lhu u$. Therefore $\theta(u)=u\rhu
\lambda=\lambda\lhu u=\nu(u)\rhu \lambda=\theta(\nu(u))$, so
$\nu(u)=u$, since $\theta$ is injective.

It is easy to check that $\delta:{_1R_\nu}\ot_R{_1R_\nu}\ra
{_1R_{\nu^2}}$, $\delta (r\ot_Rs)=r\nu (s)$ for any $r,s\in R$, and
$\omega:{_1R_{\nu^2}}\ra R$, $\omega(r)=ru^{-1}$ for any $r\in R$,
are both bimodule isomorphisms. Composing them, we obtain an
isomorphism $F=\omega\delta:{_1R_\nu}\ot_R{_1R_\nu}\ra R$,
$F(r\ot_Rs)=r\nu (s)u^{-1}$. Denoting by $*$ the right action of $R$
on $_1R_\nu$, we see that for any $r,s,t\in R$ \bea
r* F(s\ot_Rt)&=&r* (s\nu (t)u^{-1})\\
&=&r\nu(s\nu (t)u^{-1})\\
&=&r\nu(s)u^{-1}tu\nu(u^{-1})\;\;\;\;\;\; \mbox{(since }
\nu^2(t)=u^{-1}tu\mbox{)} \\
&=&r\nu(s)u^{-1}t\;\;\;\;\;\;\;\;\;\;\;\;\;\;\;\;\;\;\; \mbox{(since }\nu(u)=u\mbox{)}\\
&=&F(r\ot_Rs)t \eea

Since $_1R_\nu\simeq R^*$, $F$ induces an associative bimodule
isomorphism $F':R^*\ot_RR^*\ra R$, and then any such isomorphism is
associative by Proposition \ref{anyisoassoc}.
\end{proof}

In the initial version of the paper, we were able to answer Question
2 only in the Frobenius case. One of the referees indicated us how
the quasi-Frobenius case can be derived from the Frobenius one. Some
of the steps of the approach were explained in Section
\ref{sectionpicardquasi}, and we show below how the conclusion can
be reached. We keep the notation at the beginning of Section
\ref{sectionpicardquasi}, where $R$ and $S$ are two Morita
equivalent algebras, $(R,S,P,Q,[,],(,))$ is a strict Morita context
connecting them, $F=Q\ot_R(-)\ot_RP$ is the induced monoidal
equivalence between their categories of bimodules, and $G$ is its
quasi-inverse. Denote by $\theta_M:F(M)\ot_SF(M)\ra F(M\ot_R M)$,
$$\theta_M(q_1\ot_R
m_1\ot_Rp_1\ot_Sq_2\ot_Rm_2\ot_Rp_2)=q_1\ot_Rm_1[p_1,q_2]\ot_Rm_2\ot_Rp_2,$$
and $\mu:F(R)\ra S$, $\mu(q\ot_Rr\ot_Rp)=(qr,p)$, the isomorphisms
of $S$-bimodules associated with this monoidal equivalence.

If $M$ is an $R$-bimodule and $\psi:M\ot_RM\ra R$ is an $R$-bimodule
isomorphism, consider the isomorphism $\tilde{\psi}=\mu
F(\psi)\theta_M:F(M)\ot_SF(M)\ra S$.

\begin{proposition} \label{proptransferasociativ}
{\rm (i)} The mapping $\psi\mapsto \tilde{\psi}$ is a bijective
correspondence between the isomorphisms of $R$-bimodules
$M\ot_RM\stackrel{\sim}{\ra} R$ and the isomorphisms of
$S$-bimodules $F(M)\ot_SF(M)\stackrel{\sim}{\ra} S$.\\
{\rm (ii)} If $\psi:M\ot_RM\ra R$ is an associative isomorphism of
$R$-bimodules, then $\tilde{\psi}$ is associative.
\end{proposition}
\begin{proof}
(i) follows immediately since $F$ is full and faithful. For (ii),
let  $z_i=q_i\ot_Rm_i\ot_Rp_i\in F(M)$ for $1\leq i\leq 3$. Then
$\tilde{\psi}(z_1\ot_Sz_2)=(q_1\psi(m_1[p_1,q_2]\ot_Rm_2),p_2)$, so
\bea
\tilde{\psi}(z_1\ot_Sz_2)z_3&=&(q_1\psi(m_1[p_1,q_2]\ot_Rm_2),p_2)q_3\ot_Rm_3\ot_Rp_3\\
&=&q_1\psi(m_1[p_1,q_2]\ot_Rm_2)[p_2,q_3]\ot_Rm_3\ot_Rp_3\\
&=&q_1\ot_R\psi(m_1\ot_R[p_1,q_2]m_2)[p_2,q_3]m_3\ot_Rp_3 \\
&=&q_1\ot_Rm_1\psi([p_1,q_2]m_2[p_2,q_3]\ot_Rm_3)\ot_Rp_3 \\
&=&q_1\ot_Rm_1\ot_R [p_1,q_2]\psi(m_2[p_2,q_3]\ot_Rm_3)p_3 \\
&=&q_1\ot_Rm_1\ot_R p_1(q_2\psi(m_2[p_2,q_3]\ot_Rm_3),p_3) \\
&=&z_1\tilde{\psi}(z_2\ot_S z_3)\eea An alternative proof can be
done with a categorical approach,  using the fact that $F$ is a
monoidal equivalence and showing the commutativity of some diagrams.
\end{proof}

\begin{corollary} \label{corolartransferasociativ}
Let $M$ be an $R$-bimodule. Then any isomorphism of $R$-bimodules
$M\ot_RM\stackrel{\sim}{\ra} R$ is associative if and only if any
isomorphism of $S$-bimodules $F(M)\ot_SF(M)\stackrel{\sim}{\ra} S$
is associative. \end{corollary}
\begin{proof} The only if
part follows directly from Proposition \ref{proptransferasociativ},
while the if part follows by applying the direct implication for the
quasi-inverse $G$ of $F$ and the $S$-bimodule $F(M)$.
\end{proof}

Now we can answer Question 2 in the general.

\begin{proposition} \label{quasiFrobeniusasociativ}
Let $R$ be a finite-dimensional algebra such that $R^*\ot_RR^*\simeq
R$ as bimodules. Then any isomorphism $\psi:R^*\ot_RR^*\ra R$ is
associative.
\end{proposition}
\begin{proof}
We have seen that $R$ is necessarily quasi-Frobenius. Let $S$ be a
basic algebra of $R$. Then $S$ is Frobenius. We know that
$F(R^*)\simeq S^*$ by Proposition \ref{3.2}, and that
$S^*\ot_SS^*\simeq S$ by Proposition \ref{proptransferasociativ}(i)
(or alternatively, by Corollary \ref{3.4}). By Lemma
\ref{Frobeniusasociativ}, any isomorphism
$S^*\ot_SS^*\stackrel{\sim}{\ra} S$ is associative. Corollary
\ref{corolartransferasociativ} shows now that any isomorphism
$R^*\ot_RR^*\stackrel{\sim}{\ra} R$ is associative.
\end{proof}

As a consequence, we can construct semitrivial extensions that are
strongly graded algebras from any algebra whose dual has order at
most 2 in the Picard group.

\begin{proposition} \label{propositionconstruction}
Let $R$ be a finite-dimensional algebra such that there exists an
isomorphism of $R$-bimodules $\psi:R^*\ot_RR^*\ra R$. Then
$A=R\rtimes_\psi R^*$ is a symmetric algebra and a strongly
$C_2$-graded algebra with grading given by $A_e=R\rtimes 0$ and
$A_c=0\rtimes R^*$.
\end{proposition}

In the particular case where $R=\mathcal{R}$ is Nakayama's
9-dimensional algebra, and
$\varphi:\mathcal{R}^*\ot_\mathcal{R}\mathcal{R}^*\ra \mathcal{R}$
is an isomorphism of bimodules, for example the one described in
Remark \ref{remarcavarphi}, we obtain an example answering in the
negative our initial Question 1, for both the symmetric property and
the Frobenius property.

\begin{corollary} \label{corolarexemplu}
$\mathcal{R}\rtimes_{\varphi} \mathcal{R}^*$  is a symmetric
strongly $C_2$-graded algebra, whose homogeneous component of
trivial degree is not Frobenius.
\end{corollary}

This example also answers a question posed by the referee of our
paper \cite{dnn2}. It was proved in \cite[Proposition 2.1]{dnn2}
that if $B$ is a subalgebra of a Frobenius algebra $A$, such that
$A$ is free as a left $B$-module and also as a right $B$-module,
then $B$ is Frobenius, too. The question was whether the conclusion
remains valid if we only suppose that $A$ is projective as a left
$B$-module and as a right $B$-module. The example constructed in
Corollary \ref{corolarexemplu} shows that the answer is negative.
Indeed, $A$ is even symmetric, and it is projective as a left
$A_e$-module and as a right $A_e$-module, however $A_e$ is not a
Frobenius algebra.

As another consequence of Proposition \ref{propositionconstruction},
we obtain a class of examples of strongly graded algebras that are
symmetric as algebras, while their homogeneous component of trivial
degree is Frobenius, but not symmetric. Indeed, we can take
 a Frobenius algebra $R$ such that the order of
 $[R^*]$ in ${\rm Pic}(R)$ is $2$; in other words, the Nakayama automorphism $\nu$
 with respect to a Frobenius form is not inner, but $\nu^2$ is
 inner. Then there is an isomorphism of
 $R$-bimodules $\psi:R^*\ot_RR^*\ra R$, and by
 Lemma \ref{Frobeniusasociativ}, it is associative. Hence we can form the semitrivial
 extension $R\rtimes_{\psi}R^*$,  a strongly $C_2$-graded
 algebra which is symmetric, and its homogeneous component of
 trivial degree is isomorphic to $R$, which is Frobenius, but not
 symmetric.

 We have several classes of examples of Frobenius algebras $R$ such
 that
 $[R^*]$ has order $2$ in ${\rm Pic}(R)$:

 (i) A first class follows from Example \ref{exempleordinR^*}. For $R=H_1(C,n,c,c^*)$, the order of $[R^*]$ is $2$ if and only if
$n=2$. Thus we obtain such an $R$ if we have a finite abelian group
$C$, an element $c\in C$ with $c^2\neq 1$, and a linear character
$c^*\in C^*$ such that $(c^*)^2=1$ and $c^*(c)=-1$. A particular
family of such examples is when we take $C=<c>\simeq C_{2r}$, where
$r\geq 2$, and $c^*\in C^*$ defined by $c^*(c)=-1$, obtaining a Hopf
algebra of dimension $4r$, generated by the grouplike element $c$
and the $(1,c)$-skew-primitive element $x$, subject to relations
$c^{2r}=1, x^2=c^2-1, xc=-cx$.

(ii) A second class follows from Example \ref{exempleordinR^*}, too.
For $R=H_2(C,n,c,c^*)$, the order of $[R^*]$ is $2$ if and only if
$\frac{m}{(\frac{m}{n},n-1)}=2$, where $m$ is the order of $c^*$. It
is easy to check that this happens if and only if $m=n=2$. Thus we
need a finite abelian group $C$, a character $c^*\in C^*$ such that
$(c^*)^2=1$ and an element $c\in C$ such that $c^*(c)=-1$ (in
particular, the order of $c$ must be even). A particular family of
such examples is when we take $C=<c>\simeq C_{2r}$, where $r\geq 1$,
and $c^*\in C^*$ defined by $c^*(c)=-1$, obtaining a Hopf algebra of
dimension $4r$, generated by the grouplike element $c$ and the
$(1,c)$-skew-primitive element $x$, subject to relations $c^{2r}=1,
x^2=0, xc=-cx$. For $r=1$ this is just Sweedler's $4$-dimensional
Hopf algebra.

(iii) Another example is $R_{-1}=K_{-1}[X,Y]/(X^2,Y^2)$  from
Example \ref{exemplequantumplane} for $q=-1$.

(iv) Let $H$ be a unimodular finite-dimensional Hopf algebra, i.e.,
the spaces of left integrals and right integrals coincide in $H$;
equivalently, the unimodular element $\mathcal{G}$ is trivial. By
Radford's formula, see \cite[Theorem 12.10]{lorenz} or \cite[Theorem
10.5.6]{radford2}, $S^4(h)=a^{-1}ha$ for any $h\in H$, where $a$ is
the modular element of $H^*$ regarded inside $H$ via the isomorphism
$H\simeq H^{**}$. Thus $S^4$ is inner, and then the order of $S^2$
in ${\rm Out}(H)$ is either $1$ or $2$. By Theorem \ref{ordinH*}, in
the first case $[H^*]$ has order $1$ in ${\rm Pic}(H)$, and $H$ is
symmetric, while in the second case, $[H^*]$ has order $2$ in ${\rm
Pic}(H)$. We conclude that a class of Frobenius algebras as we are
looking for is the family of all unimodular finite-dimensional Hopf
algebras that are not symmetric. A class of such objects was
explicitly
constructed in \cite{suzuki}. \\

{\bf Acknowledgement.} We thank the referees for the comments,
corrections and suggestions that improved the exposition of the
paper and strengthened some of the results, in particular for
indicating a method for answering Question 2 in the general. The
first two authors were supported by a grant of UEFISCDI, project
number PN-III-P4-PCE-2021-0282, contract PCE 47/2022.

\end{document}